\newcommand{\tto}{\rightrightarrows}
\newcommand{\argmin}{\mathop{\rm argmin}}
 \newcommand{\eto}{\overset{e}{\to}}
\newcommand{\Rex}{\mathbb{R}\cup\{+\infty \}}
\newcommand{\eint}[1]{{#1}_{as}}
\let\epsilon\varepsilon
\newcommand{\MoreauYosida}[2]{\operatorname{e}_{#2} #1}
\begin{document}

\title{Ergodic Approach to Robust Optimization  and Infinite Programming Problems  
}


\author{Pedro P\'erez-Aros\thanks{This work was partially supported by CONICYT Chile under grants Fondecyt Regular 1190110 and Fondecyt Regular 1200283.}
}


\institute{Instituto de Ciencias de la Ingenier\'ia,
	Universidad de O'Higgins  \at
              Libertador Bernardo O'Higgins 611, Rancagua, Regi\'on del Libertador Gral. Bernardo O’Higgins, Chile \\
              \email{pedro.perez@uoh.cl}           
}

\date{Received: date / Accepted: date}

\maketitle

\begin{abstract}
In this work, we show the consistency of an approach for solving robust optimization problems using sequences of sub-problems generated by ergodic measure preserving transformations. 

The main result of this paper is that the minimizers and the optimal value of the sub-problems converge, in some sense, to the minimizers and the optimal value of the initial problem, respectively. Our result particularly implies the consistency of the scenario approach for nonconvex optimization problems.  Finally, we show that our method can also be used to solve infinite programming problems.

\keywords{Stochastic optimization \and scenario approach \and robust optimization \and
	epi-convergence \and ergodic theorems.}
\subclass{MSC 90C15 \and 90C26  \and 90C90 \and 60B11}
\end{abstract}

\section{Introduction}
Robust optimization (RO)  corresponds to a field of mathematical programming dedicated to the study of problems with uncertainty. In this class of models, the constraint set is given by the set of points, which satisfy all (or in the presence of measurability, \emph{almost all}) possible cases.   Roughly speaking, an RO problem corresponds to the following mathematical optimization model 
\begin{equation}\label{firstproblem}
\begin{aligned}
&\min g(x)&\\
\textnormal{s.t. } &x\in M(\xi), &\text{almost surely }\xi \in \Xi,
\end{aligned}
\end{equation}
where  $X$ is a Polish space, $(\Xi, \mathcal{A},\mathbb{P})$ is a probability space,  $M: \Xi \tto X$ is a measurable multifunction with closed values and  $g: X \to \Rex$ is  a lower semicontinuous   function.  We refer to  \cite{Milad_Ackooij_2015,Ackooij_Danti_Fra_2018,MR2834084,MR2546839,MR3242164,MR1375234,MR2232597} and the references therein for more details and applications.

When the number of possible scenarios $\xi \in \Xi$ is  infinite in Problem \eqref{firstproblem}, the computation of necessary and sufficient optimality conditions presents difficulties and requires a more delicate analysis than a simpler optimization problem. As far as we know, only  works related to infinite programming deal directly with infinite-many constraints (see, e.g., \cite{MR1234637,MR2295358} and the references therein). For that reason, it is necessary to solve an approximation of  Problem \eqref{firstproblem}. In this regard, the so-called \emph{scenario approach} emerges as a possible solution. The  \emph{scenario approach} corresponds to a min-max approximation of the original robust optimization problem using a sequence of samples.   It has used to provide an approximate solution to convex and nonconvex optimization problems (see, e.g., \cite{Campi_Garatti_Ramponi_2015,campi2009scenario,Care_Garatti_Campi_2015}). Furthermore,  the consistency of this method has been recently  provided in \cite{Campi2018} for convex optimization problems.

The intention of this work is to provide the consistency of the following method used to solve RO problems: Consider an  \emph{ergodic measure preserving transformation}  $T: \Xi \to \Xi$, then one can systematically  solve the  sequence of optimization problems
\begin{equation}\label{OP02ergodic2} \begin{aligned}
&\;\; \min g_n(x)\\
s.t. \;\; & x \in M(T^{k}(\xi)); \; k=1,\ldots,n,
\end{aligned}
\end{equation}
where $g_n$ is a sequence of functions, which converge continuously to the objective function $g$, and  $T^k$ represents the $k$-times composition  of $T$. Here, the desired conclusion is that the optimal value and the minimizers of  \eqref{OP02ergodic2}  converge, in some sense, to the optimal value and the minimizers of  \eqref{firstproblem} for almost all possible  choices of $\xi \in \Xi$. This conclusion is established in Corollary \ref{Corollary_ergodic}, which follows directly from our main result Theorem \ref{maintheorem01ergodic}. 

The key point in our results is to make a connection among three topics: (i)  the ideas of \emph{scenario approach}, (ii) an  \emph{ergodic theorem for random lower semicontinuous functions} established in  \cite[Theorem 1.1]{Korf_Wets_2001}, and (iii) the \emph{theory of epigraphical convergence of functions}.  After that, and due to the enormous developments in the theory of epi-convergence (see, e.g., \cite{Attouch_1984_book,Rockafellar_wets_book1998}), we can quickly establish some link between  the minimizers and the optimal value of the robust optimization problem \eqref{firstproblem} and its corresponding approximation \eqref{OP02ergodic2}.

As a consequence of this method,  we obtain the   consistency of the \emph{scenario approach}  for nonconvex optimization problems. More precisely, in this method one considers a drawing of independent and $\mathbb{P}$-distributed random function $\xi_1,\xi_2,\ldots$, and systematically  solves the sequence of optimizations problems.
\begin{equation}\label{OP020} \begin{aligned}
&\;\; \min g_n(x)\\
s.t. \;\; & x \in M(\xi_k); \; k=1,\ldots,n.
\end{aligned}
\end{equation} 
Again, the  conclusion relies on showing that the optimal value and the minimizers of  \eqref{OP020} converges to the solution of \eqref{firstproblem} for almost all possible 
sequences $(\xi_1,\xi_2,\ldots)$.

It is worth mentioning that our method allows us to  solve nonconvex optimization problems and to consider a perturbation of the objective function $g$ in \eqref{OP02ergodic2} and \eqref{OP020}, which is not guaranteed by the results of \cite{Campi2018}. Here, it has not escaped our notice   that the perturbation of  $g$ could be useful to ensure smoothness of the objective function in \eqref{OP02ergodic2} and \eqref{OP020}. On the other hand, the functions $g_n$ could be used to guarantee the existence and uniqueness of the numerical solutions of  \eqref{OP02ergodic2} and \eqref{OP020}.

The rest of the paper is organized as follows:  In Section \ref{notationandpreliminary}, we summarize the main definitions and notions using in the presented manuscript.  Next, in   Section \ref{consistency_Main}, we provide our main result, which is the consistency of the method presented in \eqref{OP02ergodic2}. In Section  \ref{SIP_SECTION}, first, we show that our result can be used to provide direct proof of the consistency of the \emph{scenario approach} for (even) nonconvex optimization problems, second, we show that our ergodic approach  can  be applied to problems related to infinite programming. In Section \ref{SectioNum}, we show some simple numerical examples of our results. Finally,  the paper ends with some conclusions and perspectives for future investigations.

\section{Notation and Preliminary}\label{notationandpreliminary}
In the following, we consider that $(X,d)$ is a Polish space, that is to say, a complete  separable metric space and $(\Xi,\mathcal{A},\mathbb{P})$  is a complete probability space. The \emph{Borel} $\sigma$-algebra on $X$ is denoted by $\mathcal{B}(X)$, which we recall  is the smallest $\sigma$-algebra containing all open sets of $X$. 

For a function $f:X \to \Rex$, a set $C\subseteq X$  and $\alpha \in \mathbb{R}$,  we define the $\alpha$-sublevel set of $f$ on $C$ as $$\textnormal{lev}_{\leq \alpha} (f,C):=\{  x\in  C: f(x) \leq \alpha		\},$$
when $C=X$, we omit the symbol $C$. We say that $f$ is lower semicontinuous  (lsc) if for all $\alpha \in \mathbb{R}$ the $\alpha$-sublevel set of $f$ on $X$ is closed. 

Following \cite{Attouch_1984_book}, let us consider a set $C \subseteq X$ and $\epsilon\geq 0$. We define the $\epsilon$-infimal value of $f$ on $C$ by
\begin{align*}
v_\epsilon(f,C):=\left\{ \begin{array}{cc}
\inf_C f + \epsilon, &\text{ if} \inf_C f >-\infty,\\
- \frac{1}{\epsilon}, &\text{ if } \inf_C f =-\infty.
\end{array}
\right.
\end{align*}
with the convention $\frac{1}{0}=+\infty$. We omit the symbol   $C$, or $\epsilon$ when $C=X$, or when $\epsilon=0$, respectively. Furthermore, we define the $\epsilon$-$\argmin$ of $f$ on $C$ by 
\begin{align*}
\epsilon\text{-}\argmin_C f := \left\{ x\in C:  f(x) \leq v_\epsilon(f,C)      \right\},
\end{align*}
again we omit the symbol   $C$, or $\epsilon$ when $C=X$ or when $\epsilon=0$, respectively.

For a set $A \subseteq X$, we define the indicator function of $A$, given by,
\begin{align*}
\delta_A(x):=\left\{ \begin{array}{cc}
0, &\text{ if } x\in A,\\
+\infty, &\text{ if } x\notin A.
\end{array}
\right.
\end{align*}

A function $f: \Xi \times X \to \Rex$ is called a \emph{random lower semicontinuous function}  (also called a \emph{normal integrand function}) if 
\begin{enumerate}[label=(\roman*)]
	\item the function $(\xi, x) \to f(\xi, x)$ is $\mathcal{A}\otimes \mathcal{B}(X)$-measurable, and 
	\item for every $\xi\in \Xi$ the function $f_\xi :=f(\xi, \cdot)$ is  lsc.
\end{enumerate}

Let us consider a set-valued map (also called a multifunction) $M: \Xi \tto X$. We  say that $M$ is measurable  if for every open set $U \subseteq  X$ the set 
$$M^{-1}(U):=\{  x \in X : M(x) \cap U\neq \emptyset   \} \in\mathcal{A}.$$
For more details about the theory of normal integrand  and measurable multifunctions we refer to \cite{Rockafellar_wets_book1998,MR0467310,MR2458436,MR1485775}.

Consider a sequence of sets $S_n \subseteq X$. We set $\liminf_{n\to \infty}  S_n$  and $\limsup_{n\to \infty} S_n$ as the inner-limit and the outer-limit, in the sense of  \emph{Painlev\'e-Kuratowski}, of the sequence $S_n$, respectively,  that is to say,
\begin{align*}
\liminf\limits_{n\to\infty} S_n &:= \left\{  x \in X:\limsup\limits_{n\to \infty} d(x,S_n) =0\right\} \\
\limsup\limits_{n\to\infty} S_n&:=\left\{  x \in X:\liminf\limits_{n\to \infty} d(x,S_n) =0\right\},
\end{align*}
where $d(x,S_n):=\inf \left\{ 	d(x,y) : y \in S_n			\right\}$.

Now, let us recall  some  notations about the convergence of functions. 
\begin{definition}
	Let  $f_n:X\to \Rex$ be a sequence of functions. The functions $f_n$ are said to epi-converge to $f$, denoted by $f_n\eto f$,  if for every $x\in X$
	\begin{enumerate}[label=\alph*)]
		\item $\liminf\limits_{n\to \infty} f_n(x_n) \geq f(x)$ for all $x_n \to x$.
		\item $\limsup\limits_{n\to \infty} f_n(x_n) \leq f(x)$ for some $x_n \to x$. 
	\end{enumerate}
\end{definition}
We refer to \cite{Attouch_1984_book,Rockafellar_wets_book1998} for more details about the theory of epi-graphical convergence. 

Also, we will need the following notation, which is equivalent to uniform convergence over compact sets for continuous functions (see, e.g., \cite{Rockafellar_wets_book1998}).

\begin{definition}
	We say that a sequence of functions $f_n:X\to \mathbb{R}\cup\{ +\infty\}$ converges continuously to $f$, if for every $x\in X$ and every $x_n \to x$
	\begin{align*}
	\lim\limits_{n\to \infty} f_n(x_n) =f(x).
	\end{align*}
\end{definition}

The following definition is an extension of the notation  \emph{eventually level-bounded}  used in finite-dimension setting, which can be found in \cite[Chapter 7.E ]{Rockafellar_wets_book1998}. We extend this notation as follows: Consider a sequence of functions $(f_n)_{n\in \mathbb{N}}$ and a sequence of sets $(C_n)_{n\in \mathbb{N}}$, we say that a sequence of functions $f_n$ is \emph{eventually level-compact on} $C_n$, if for each $\alpha \in \mathbb{R}$  there exists $n_\alpha\in \mathbb{N}$ such that  $$\bigcup\limits_{n \geq n_\alpha} \textnormal{lev}_{\leq \alpha}(f_n,C_n)  \text{ is relatively compact}.$$ 
In particular, if $C_n =X$, we simply say that $f_n$ is \emph{eventually level-compact}.

Now, we present two results. The first lemma shows that the sum of an epi-convergent sequence and a continuously convergent sequence   epi-convergences to the sum of limits.  The second proposition corresponds to a slight generalization of \cite[Proposition 2.9]{Attouch_1984_book} (see also \cite[Proposition 7.30]{Rockafellar_wets_book1998}), where only sequences $\epsilon_n \to 0$ were considered. For the sake of brevity we shall omit the proves.

\begin{lemma}\label{epigraphconvergencesum}
	Consider sequences of functions $p_n,q_n: X \to \Rex$ such that $p_n$ converges continuously to $p$ and $q_n$ epi-converges to $q$. Then,
	$p_n+q_n \eto p+q$.
\end{lemma}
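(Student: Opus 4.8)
The plan is to verify directly the two defining inequalities of epi-convergence for the sequence $p_n+q_n$ against the candidate limit $p+q$. A preliminary observation keeps all the extended-real arithmetic unambiguous: since $q$ is an epi-limit of functions valued in $\Rex$, it too is $\Rex$-valued, so $q(x)>-\infty$ for every $x\in X$, and consequently $\liminf_n q_n(x_n)\geq q(x)>-\infty$ along any $x_n\to x$; this is exactly what prevents an indeterminate form $+\infty+(-\infty)$ from ever appearing.

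For condition a) I would fix $x\in X$, take an arbitrary $x_n\to x$, and combine the two hypotheses: continuous convergence of $p_n$ yields $p_n(x_n)\to p(x)$, and epi-convergence of $q_n$ yields $\liminf_n q_n(x_n)\geq q(x)$. Since $q_n(x_n)$ is then eventually bounded below and $p_n(x_n)$ converges, the liminf of the sum splits as $\liminf_n\big(p_n(x_n)+q_n(x_n)\big)=p(x)+\liminf_n q_n(x_n)\geq p(x)+q(x)$, which is the first half of the definition for $p_n+q_n$.

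For condition b) I would fix $x\in X$, invoke the epi-convergence of $q_n$ to obtain a recovery sequence $x_n\to x$ with $\limsup_n q_n(x_n)\leq q(x)$, and use that continuous convergence of $p_n$ forces $p_n(x_n)\to p(x)$ along this very sequence. Then $\limsup_n\big(p_n(x_n)+q_n(x_n)\big)\leq p(x)+\limsup_n q_n(x_n)\leq p(x)+q(x)$ --- the bound being trivial when $p(x)=+\infty$ or $q(x)=+\infty$ (again because $q(x)>-\infty$), and otherwise the standard splitting of a limsup when one of the two summand-sequences converges. This gives the second half, and together with the previous paragraph it establishes $p_n+q_n\eto p+q$.

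I do not anticipate a real obstacle: the mathematical content is simply that continuous convergence is robust enough to be ``added'' to epi-convergence without damaging either of its two defining inequalities, and the only point demanding attention is the bookkeeping with $+\infty$, handled once and for all by the remark that $q$ and the relevant liminfs are bounded below. An alternative, essentially equivalent route would express $\operatorname{epi}(p_n+q_n)$ through $\operatorname{epi}p_n$ and $\operatorname{epi}q_n$ and appeal to Painlev\'e--Kuratowski set convergence, but the two pointwise estimates above are the most economical.
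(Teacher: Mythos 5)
The paper offers no proof to compare against: it explicitly omits the proof of this lemma ``for the sake of brevity.'' Taken on its own terms, your argument is the standard one and its core is correct: you check the liminf inequality along an arbitrary sequence $x_n\to x$ (splitting the liminf because $p_n(x_n)$ converges and $q_n(x_n)$ is eventually bounded below), and you check the limsup inequality along a recovery sequence for $q_n$, using that continuous convergence forces $p_n(x_n)\to p(x)$ along \emph{every} sequence, in particular along that one. This is exactly how one expects the lemma to be proved.

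The one genuine flaw is your preliminary observation. An epi-limit of $\Rex$-valued functions need not be $\Rex$-valued: $q_n\equiv -n$ is real-valued and epi-converges to the constant function $-\infty$, so $q(x)>-\infty$ cannot be \emph{derived} as you claim. Nor is this harmless bookkeeping: if $q$ is allowed to take the value $-\infty$ where $p=+\infty$, the conclusion genuinely fails. Indeed, with $p_n\equiv n$ (which converges continuously to $p\equiv+\infty$), both $q_n\equiv -2n$ and $q_n\equiv -n/2$ epi-converge to $q\equiv-\infty$, yet $p_n+q_n$ epi-converges to the constant $-\infty$ in the first case and to the constant $+\infty$ in the second; since the limit is not determined by $p$ and $q$ alone, no convention for $\infty-\infty$ rescues the statement in that degenerate situation. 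The correct reading is that $p$ and $q$, like $p_n$ and $q_n$, are understood to map into $\Rex$ — automatic in the paper's application, where $p=g$ is an $\Rex$-valued lsc function and $q=\delta_{\eint{M}}$ is an indicator — so the bound $q(x)>-\infty$ (and $p(x)>-\infty$) is a hypothesis, not a consequence. With that reading, your two verifications go through and the proof is complete.
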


\begin{proposition}\label{Proposition2.9_Attouch_1984_book}
	Let $f_n \eto f$ and $\epsilon_n \geq 0$ be a  sequence such that  $\epsilon =\limsup\epsilon_n<+\infty$. Then,
	\begin{enumerate}[label=\alph*)]
		\item $\limsup v_{\epsilon_n}(f_n) \leq v_\epsilon(f)$.
		\item 	$\limsup \epsilon_n\text{-}\argmin f_n  \subseteq \epsilon\text{-}\argmin f.$
	\end{enumerate}
\end{proposition}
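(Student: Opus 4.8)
The plan is to isolate the one consequence of epi-convergence that does all the work here, namely the one-sided control of infima
\[
\limsup_{n\to\infty}\,\bigl(\inf\nolimits_X f_n\bigr)\ \le\ \inf\nolimits_X f ,
\]
and then to push it through the definitions of $v_\epsilon$ and of the $\epsilon$-$\argmin$. This inequality is immediate from the recovery half of $f_n\eto f$: for any $x\in X$ choose $x_n\to x$ with $\limsup_n f_n(x_n)\le f(x)$; since $\inf_X f_n\le f_n(x_n)$ for each $n$, this gives $\limsup_n(\inf_X f_n)\le f(x)$, and an infimum over $x$ finishes it. Part b) will then be reduced to part a) via the $\liminf$ half of epi-convergence, so the core of the argument is part a).

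For part a) I would distinguish cases according to $\inf_X f$. If $f\equiv+\infty$ then $v_\epsilon(f)=+\infty$ and there is nothing to show. If $\inf_X f=-\infty$, the displayed inequality forces $\inf_X f_n\to-\infty$; separating the indices into $N_1=\{n:\inf_X f_n>-\infty\}$ and $N_2=\{n:\inf_X f_n=-\infty\}$, along $N_1$ we have $v_{\epsilon_n}(f_n)=\inf_X f_n+\epsilon_n\to-\infty$ since $(\epsilon_n)$ is bounded, while along $N_2$ we have $v_{\epsilon_n}(f_n)=-1/\epsilon_n$ with $\limsup_{n\in N_2}(-1/\epsilon_n)\le-1/\epsilon=v_\epsilon(f)$ because $\limsup_n\epsilon_n=\epsilon$ and $t\mapsto-1/t$ is nondecreasing (with the stated convention); combining, $\limsup_n v_{\epsilon_n}(f_n)\le v_\epsilon(f)$. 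If $-\infty<\inf_X f<+\infty$, fix $\delta>0$, pick $x$ with $f(x)<\inf_X f+\delta$, and take a recovery sequence $x_n\to x$; then $\inf_X f_n\le f_n(x_n)<\inf_X f+\delta<+\infty$ for all large $n$, hence $v_{\epsilon_n}(f_n)\le\inf_X f_n+\epsilon_n<\inf_X f+\delta+\epsilon_n$, so $\limsup_n v_{\epsilon_n}(f_n)\le\inf_X f+\delta+\epsilon$, and letting $\delta\downarrow0$ yields the claim.

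The step I expect to be the real obstacle sits inside this last case: the estimate $v_{\epsilon_n}(f_n)\le\inf_X f_n+\epsilon_n$ is only the definition of $v_{\epsilon_n}(f_n)$ at indices where $\inf_X f_n>-\infty$, so one must control (or exclude) the indices at which $\inf_X f_n=-\infty$ although $\inf_X f$ is finite, where instead $v_{\epsilon_n}(f_n)=-1/\epsilon_n$. Reconciling the $-1/\epsilon$ convention on the $\inf=-\infty$ branch with the behaviour of epi-limits — exploiting any lower boundedness or properness available in the setting, or a tightness/recession argument showing such indices cannot occur — is the one place a genuinely careful argument is needed; everything else is the routine near-minimizer bookkeeping above.

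Finally, part b) follows from part a). Let $x\in\limsup_n \epsilon_n\text{-}\argmin f_n$; by definition of the Painlev\'e--Kuratowski outer limit there are $n_k\uparrow\infty$ and $x_{n_k}\in\epsilon_{n_k}\text{-}\argmin f_{n_k}$ with $x_{n_k}\to x$, so $f_{n_k}(x_{n_k})\le v_{\epsilon_{n_k}}(f_{n_k})$. The subsequence $(f_{n_k})_k$ still epi-converges to $f$ and $\epsilon':=\limsup_k\epsilon_{n_k}\le\epsilon<+\infty$, so part a) applied along $(n_k)$ gives $\limsup_k v_{\epsilon_{n_k}}(f_{n_k})\le v_{\epsilon'}(f)\le v_\epsilon(f)$, the last inequality because $v_\alpha(f)$ is nondecreasing in $\alpha\ge0$ (both branches of the definition are). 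Combining with the $\liminf$ half of epi-convergence,
\[
f(x)\ \le\ \liminf_{k\to\infty} f_{n_k}(x_{n_k})\ \le\ \liminf_{k\to\infty} v_{\epsilon_{n_k}}(f_{n_k})\ \le\ v_\epsilon(f),
\]
so $f(x)\le v_\epsilon(f)$, i.e.\ $x\in\epsilon\text{-}\argmin f$, which is the desired inclusion.
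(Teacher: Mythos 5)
The paper offers no proof of this proposition to compare against (it is stated with ``we shall omit the proves,'' as a slight generalization of Attouch's Proposition~2.9 from $\epsilon_n\to 0$ to $\limsup_n\epsilon_n=\epsilon<+\infty$), so your argument stands or falls on its own. Most of it is fine: the derivation of $\limsup_n(\inf_X f_n)\le\inf_X f$ from the recovery half of epi-convergence, the cases $\inf_X f=+\infty$ and $\inf_X f=-\infty$ in part a), and the reduction of part b) to part a) via subsequences and the liminf inequality are all correct. But the step you flag and then leave open is a genuine gap, and in fact it cannot be closed from the stated hypotheses: epi-convergence does not prevent $\inf_X f_n=-\infty$ for \emph{every} $n$ while $\inf_X f$ is finite, and on those indices the value $v_{\epsilon_n}(f_n)=-1/\epsilon_n$ need not lie below $\inf_X f+\epsilon$. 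Concretely, on $X=\mathbb{R}$ take $f_n(x)=\min\{0,\,n-|x|\}-10$ and $\epsilon_n\equiv 1$. Along any $x_n\to x$ one eventually has $f_n(x_n)=-10$, so $f_n\eto f\equiv -10$; yet $\inf f_n=-\infty$ for all $n$, whence $v_1(f_n)=-1$, while $v_1(f)=-9$, and part a) fails. A variant kills part b) as well: let $h=-10$ on $[0,1]$ and $h=-3/2$ elsewhere (lsc), and $h_n=h+\min\{0,\,n-|\cdot|\}$; then $2\in 1\text{-}\argmin h_n=\{x:h_n(x)\le -1\}$ for all $n\ge 2$, but $1\text{-}\argmin h=[0,1]$. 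So what you identified is not a missing trick but a missing hypothesis.

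The statement, and your proof of it, become correct as soon as one knows $\inf_X f_n>-\infty$ for all large $n$: then $v_{\epsilon_n}(f_n)=\inf_X f_n+\epsilon_n$ on those indices and your recovery-sequence estimate finishes the finite case, after which your part b) argument goes through verbatim. Alternatively the original hypothesis $\epsilon_n\to 0$ (so $\epsilon=0$) also repairs everything, because on the problematic branch $-1/\epsilon_n\to-\infty$, which is why Attouch's version is unproblematic; note also that $v_0(f)=-\infty$ when $\inf_X f=-\infty$, so monotonicity of $\alpha\mapsto v_\alpha(f)$ still holds, as you use in part b). You should state one of these amendments explicitly --- for instance add ``$\inf_X f_n>-\infty$ for all sufficiently large $n$'' to the hypotheses --- rather than leaving the reconciliation of the $-1/\epsilon$ convention as an open step; with that amendment in place your write-up is a complete proof.
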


\section{Consistency of  the  Approach to Robust Optimization Problems}\label{consistency_Main}
In this section we consider the following  optimization problem

\begin{equation}\label{OP01}\tag{$\mathcal{R}$}
\begin{aligned}
&\;\; \min g(x)\\
s.t. \;\;&x \in \eint{M},
\end{aligned}
\end{equation}
where $\eint{M}:=\{  x\in X: x \in M(\xi) \text{ a.s.}\}$,  $g: X\to \Rex$ is an lsc function and $M: \Xi \tto X$ is a measurable multifunction with closed values. We study  an approach using  \emph{ergodic measure preserving transformation}. Particularly,  we show the consistency of this method.

Now, we consider the following approach using \emph{ergodic measure  preserving transformation}. First, let us formally introduce this notion.
Consider a (complete) probability space  $(\Xi,\mathcal{A},\mathbb{P} )$ and a measurable function $T: \Xi \to \Xi$. We say that $T$ \emph{preserves measure} if
\begin{align}\label{pres_meas}
\mathbb{P}( T^{-1}(A) )= \mathbb{P}(A), \text{  for all }A\in \mathcal{A}.
\end{align}
Furthermore, we say that $T$ is \emph{ergodic} provided that for all $A\in \mathcal{A}$
\begin{align}\label{ergodic_trans}
A=T^{-1}(A) \Rightarrow \mathbb{P}(A)=0, \text{ or }  \mathbb{P}(A^c) =0.
\end{align}
Consequently, we say that $T$ is an  \emph{ergodic measure  preserving transformation} provided that  $T$ satisfies \eqref{pres_meas} and \eqref{ergodic_trans} .

We consider a sequence of lsc functions $g_n$, which converge continuously to $g$,  let us consider an  \emph{ergodic measure preserving transformation}  $T: \Xi \to \Xi$.  With this setting, we define the following family of optimization problems: For a point $\xi \in \Xi$ we define
\begin{equation}\label{OP02ergodic}\tag{$\mathcal{E}_{n}(\xi)$} \begin{aligned}
& \quad \quad \quad\min g_n(x)\\
s.t. \;\; & x \in E_n(\xi):=\bigcap\limits_{k=1}^n M(T^{k}(\xi)),
\end{aligned}
\end{equation}
where $T^{k}$ denotes the $k$-times composition of $T$.  In order to show  more clearly the link of epigraphical convergence and the relation  between \eqref{OP01} and \eqref{OP02ergodic}, let us define the functions 
$f_n : \Xi \times X \to \Rex$ and $f: \Xi \times X \to \Rex$ by
\begin{align}\label{notation_ergodic}\begin{aligned}
f_n(\xi, x)&:= g_n(x) + \frac{1}{n} \sum\limits_{k=1}^n\delta_{M(T^{k}(\xi))}(x),\\
f(x)&:= g(x) + \delta_{\eint{M}}(x).
\end{aligned}
\end{align}
With this notation we can write the relationship  between \eqref{OP01} and \eqref{OP02ergodic} in a functional formulation.
\begin{theorem}\label{maintheorem01ergodic}
	Under the above setting we have that $f_n(\xi,\cdot) \eto f$, $\mathbb{P}$-a.s. Consequently for any measurable sequence $\epsilon_n :\Xi^\infty \to (0,+\infty)$ with $\epsilon(\xi) :=\limsup \epsilon_n(\xi)<+\infty$,  $\mathbb{P}$-a.s.  we have that:
	\begin{enumerate}[label=\alph*)]
		\item $\limsup\limits_{n\to \infty }v_{\epsilon_n(\xi) } (f_n(\xi,\cdot) ) \leq  v_{\epsilon(\xi)} (f)$, $\mathbb{P}$-a.s.
		\item  
		$\limsup\limits_{n\to \infty} \epsilon_n(\xi)\text{-}\argmin f_n(\xi,\cdot) \subseteq \epsilon(\xi)\text{-}\argmin f,\;
		\mathbb{P}\text{-a.s.}$
	\end{enumerate}
\end{theorem}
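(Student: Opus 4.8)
The plan is to split the integrand $f_n(\xi,\cdot)$ into a deterministic continuously convergent part and a random averaged part, handle the latter by the ergodic theorem for normal integrands, recombine the two through the calculus of epi-convergence, and finally read off a) and b) from Proposition~\ref{Proposition2.9_Attouch_1984_book}. Concretely, put $h:\Xi\times X\to\Rex$, $h(\xi,x):=\delta_{M(\xi)}(x)$. Then $h$ is a random lower semicontinuous function: for each $\xi$ the function $h_\xi=\delta_{M(\xi)}$ is lsc since $M(\xi)$ is closed, and the $\mathcal{A}\otimes\mathcal{B}(X)$-measurability of $h$ is the standard fact that the indicator of a measurable closed-valued multifunction into a Polish space is a normal integrand. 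Moreover, for fixed $x$ the map $\xi\mapsto h(\xi,x)$ is measurable, nonnegative, and takes only the values $0$ and $+\infty$, so $\int_\Xi h(\xi,x)\,d\mathbb{P}(\xi)$ equals $0$ precisely when $\mathbb{P}(\{\xi: x\in M(\xi)\})=1$, i.e. when $x\in\eint{M}$, and equals $+\infty$ otherwise; hence the expectation integrand of $h$ is exactly $\delta_{\eint{M}}$.

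The core step is to apply \cite[Theorem~1.1]{Korf_Wets_2001} to the normal integrand $h$ and the ergodic measure preserving transformation $T$. The structural hypothesis needed there, a local integrable lower bound for $h$, is immediate since $h\geq 0$. Thus there is a $\mathbb{P}$-null set $N\subseteq\Xi$ so that for every $\xi\notin N$ the averages $\tfrac1n\sum_{k=1}^n h(T^{k}(\xi),\cdot)=\tfrac1n\sum_{k=1}^n\delta_{M(T^{k}(\xi))}(\cdot)$ epi-converge to the expectation integrand $\delta_{\eint{M}}$ (the innocuous shift of the summation index, from $\{0,\dots,n-1\}$ to $\{1,\dots,n\}$, is absorbed by passing from $\xi$ to $T(\xi)$, which leaves the null set null because $T$ preserves $\mathbb{P}$).

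Now fix $\xi\notin N$. Since $g_n$ converges continuously to $g$ by assumption, Lemma~\ref{epigraphconvergencesum}, applied with $p_n=g_n$ and $q_n=\tfrac1n\sum_{k=1}^n\delta_{M(T^{k}(\xi))}(\cdot)$, gives $f_n(\xi,\cdot)=g_n+q_n\eto g+\delta_{\eint{M}}=f$; since this holds for all $\xi\notin N$, it holds $\mathbb{P}$-a.s., which is the first assertion. Enlarging $N$ by the null set off which $\epsilon(\xi)=\limsup_n\epsilon_n(\xi)<+\infty$, for every $\xi$ outside the resulting null set the sequence $(\epsilon_n(\xi))_{n}\subseteq(0,+\infty)$ has finite limsup and $f_n(\xi,\cdot)\eto f$, so Proposition~\ref{Proposition2.9_Attouch_1984_book} yields at once $\limsup_n v_{\epsilon_n(\xi)}(f_n(\xi,\cdot))\le v_{\epsilon(\xi)}(f)$ and $\limsup_n \epsilon_n(\xi)\text{-}\argmin f_n(\xi,\cdot)\subseteq \epsilon(\xi)\text{-}\argmin f$, which are exactly a) and b).

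I expect the delicate point to be the invocation of \cite[Theorem~1.1]{Korf_Wets_2001}: one must check that its hypotheses are genuinely met by the averaged indicator integrand and, above all, that the almost-sure epi-limit is exactly the expectation integrand $\delta_{\eint{M}}$ (and not merely a minorant of it), with a single exceptional null set serving simultaneously for all $x\in X$. Once the ergodic step is secured, the remaining arguments are a routine, pointwise-in-$\xi$ application of the deterministic epi-convergence calculus collected in Lemma~\ref{epigraphconvergencesum} and Proposition~\ref{Proposition2.9_Attouch_1984_book}.
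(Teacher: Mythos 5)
Your proposal is correct and follows essentially the same route as the paper: the same splitting of $f_n(\xi,\cdot)$ into the continuously convergent part $g_n$ and the averaged indicator part, the same invocation of the ergodic epi-convergence theorem of Korf--Wets to identify the a.s.\ epi-limit of the averages with $\delta_{\eint{M}}$, followed by Lemma~\ref{epigraphconvergencesum} and Proposition~\ref{Proposition2.9_Attouch_1984_book}. Your added remarks on the nonnegativity of the indicator integrand (supplying the integrability hypothesis) and on the identification of the expectation integrand with $\delta_{\eint{M}}$ only make explicit what the paper states more tersely.
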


\begin{proof}
	Let us consider the sequence of functions \begin{align*}
	p_n(x):=g_n(x) \quad \text{and} \quad q_n(\xi, x):=\frac{1}{n} \sum_{k=1}^n\delta_{M( T^{k}(\xi))}(x)
	\end{align*}. 
	
	It is not difficult to see that  the function $(\xi, x) \to \delta_{M(\xi)}(x)$ is a random lsc function and the function $\xi \to \inf   \delta_{M(\xi)}(x)$ is integrable. Then, by \cite[Theorem 1.1]{Korf_Wets_2001}, we have that $$q_n(\xi,\cdot) \eto \mathbb{E}_{\xi}(\delta_{M(\xi)}(\cdot))=\delta_{\eint{M}}(\cdot), \;\mathbb{P}\text{-a.s.}$$
	Now, define $\hat{\Xi}:=\{ \xi \in \Xi : q_n(\xi,\cdot) \eto \delta_{\eint{M}}   \}$, it follows that $\mathbb{P}(\hat{\Xi})=1$. Thus for all $\xi \in \hat{\Xi}$ we apply Lemma \ref{epigraphconvergencesum}, which implies that  for all $\xi  \in \hat{\Xi}$,   we have $p_n + q_n(\xi, \cdot) \overset{e}{\to } f$, that is to say, 
	$f_n(\xi, \cdot ) \overset{e}{\to} f $ for all  $\xi \in \hat{\Xi}$.
	
	Now, by Proposition \ref{Proposition2.9_Attouch_1984_book} we have that for all $\xi  \in \hat{\Xi}$,
	\begin{align*}
	\limsup\limits_{n\to \infty }v_{\epsilon_n(\xi) } (f_n(\xi,\cdot) ) \leq  v_{\epsilon(\xi)} (f), \text{ and}\\
	\limsup\limits_{n\to \infty} \epsilon_n(\xi)\text{-}\argmin f_n(\xi,\cdot) \subseteq \epsilon(\xi)\text{-}\argmin f,
	\end{align*}
	which concludes the proof.
\end{proof}

 When there are additional assumptions about  the feasibility and compactness of the optimization problems  \eqref{OP01} and \eqref{OP02ergodic} we can establish  a tighter conclusion. We translate the hypothesis into notation of the problems \eqref{OP01} and \eqref{OP02ergodic}, respectively.
\begin{corollary}\label{Corollary_ergodic}
	Let us assume that  \eqref{OP01} is feasible, and  the sequence of function $g_n$  is eventually level compact on $E_n(\xi)$   $\mathbb{P}$-a.s. $\xi \in \Xi$.	Then,
	\begin{enumerate}[label=\alph*)]
		\item	$\lim\limits_{n\to \infty }v( g_n, E_n(\xi) ) =  v(g,M_{as})$, $\mathbb{P}$-a.s.
		\item For any measurable sequence $\epsilon_n :\Xi \to (0,+\infty)$ with $\epsilon_n(\xi) \to 0$, $\mathbb{P}\text{-a.s.}$ we have that 
		\begin{align*}
		\emptyset \neq \limsup\limits_{n\to \infty} \epsilon_n(\xi)\text{-}\argmin\limits_{ E_n(\xi)} g_n \subseteq \argmin\limits_{M_{as}} g  ,\;
		\mathbb{P}\text{-a.s.}
		\end{align*}
		\item $
		\bigcap\limits_{\epsilon >0 } \liminf\limits_{n\to \infty} \epsilon\text{-}\argmin\limits_{E_n(\xi)} g_n  = \argmin\limits_{M_{as}}  g  = \bigcap\limits_{\epsilon >0 } \limsup\limits_{n\to \infty} \epsilon\text{-}\argmin\limits_{E_n(\xi)} g_n ,\; \mathbb{P}\text{-a.s.}
	$
	\end{enumerate}
\end{corollary}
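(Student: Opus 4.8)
The plan is to deduce the corollary from Theorem \ref{maintheorem01ergodic} together with the classical consequences of epi-convergence under a compactness hypothesis, after a purely cosmetic reformulation in terms of the functions $f_n$, $f$ introduced in \eqref{notation_ergodic}.

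First I would record the identity $\frac1n\sum_{k=1}^{n}\delta_{M(T^{k}(\xi))}=\delta_{E_n(\xi)}$, which is immediate from the definition of the indicator function; hence $f_n(\xi,\cdot)=g_n+\delta_{E_n(\xi)}$ and $f=g+\delta_{M_{as}}$. Consequently $\textnormal{lev}_{\leq\alpha}\bigl(f_n(\xi,\cdot)\bigr)=\textnormal{lev}_{\leq\alpha}(g_n,E_n(\xi))$, $v_\epsilon\bigl(f_n(\xi,\cdot)\bigr)=v_\epsilon(g_n,E_n(\xi))$, $v(f)=v(g,M_{as})$, $\epsilon\text{-}\argmin f_n(\xi,\cdot)=\epsilon\text{-}\argmin_{E_n(\xi)}g_n$ and $\argmin f=\argmin_{M_{as}}g$. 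In particular, the assumption that $g_n$ is eventually level-compact on $E_n(\xi)$ is exactly the assertion that $\{f_n(\xi,\cdot)\}_n$ is eventually level-compact, and a)--c) become statements about $f_n(\xi,\cdot)$ and its epi-limit $f$; moreover, feasibility of \eqref{OP01} together with lower semicontinuity of $g$ ensures that $f$ is a proper lsc function.

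Next I would fix, once and for all, a full-measure set on which all conclusions hold: intersect the set $\hat{\Xi}$ from the proof of Theorem \ref{maintheorem01ergodic} (on which $f_n(\xi,\cdot)\eto f$) with the full-measure set on which $g_n$ is eventually level-compact on $E_n(\xi)$; this set does not depend on $\epsilon_n$. Fixing $\xi$ in it and suppressing it, the statement reduces to the variational fact: if $f_n\eto f$ with $f$ proper lsc and $\{f_n\}$ eventually level-compact, then (a$'$) $\inf f_n\to\inf f$; (b$'$) $\emptyset\neq\limsup_n\epsilon_n\text{-}\argmin f_n\subseteq\argmin f$ whenever $\epsilon_n>0$ and $\epsilon_n\to0$; and (c$'$) $\bigcap_{\epsilon>0}\liminf_n\epsilon\text{-}\argmin f_n=\argmin f=\bigcap_{\epsilon>0}\limsup_n\epsilon\text{-}\argmin f_n$. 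For (a$'$): eventual level-compactness of the $f_n$ together with $f_n\eto f$ forces the sublevel sets of $f$ to be relatively compact, so (with $f$ proper) $\inf f$ is finite and attained and $\argmin f\neq\emptyset$; the bound $\limsup\inf f_n\leq\inf f$ is Proposition \ref{Proposition2.9_Attouch_1984_book}a) with $\epsilon_n\equiv0$, and for $\liminf\inf f_n\geq\inf f$ one picks near-minimizers $x_n$ of $f_n$, uses level-compactness to extract $x_{n_j}\to\bar x$, and invokes the $\liminf$-inequality of epi-convergence to get $\inf f\leq f(\bar x)\leq\liminf_j f_{n_j}(x_{n_j})=\liminf_j\inf f_{n_j}$. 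For (b$'$): taking $x_n\in\epsilon_n\text{-}\argmin f_n$ (nonempty once $\inf f_n$ is finite, which holds eventually by (a$'$)), the bound $f_n(x_n)\leq\inf f_n+\epsilon_n$ is eventually below a fixed level, so level-compactness produces a cluster point and hence $\limsup_n\epsilon_n\text{-}\argmin f_n\neq\emptyset$, while the inclusion into $\argmin f$ is Proposition \ref{Proposition2.9_Attouch_1984_book}b) with $\epsilon=\limsup\epsilon_n=0$. For (c$'$): $\bigcap_{\epsilon>0}\limsup_n\epsilon\text{-}\argmin f_n\subseteq\argmin f$ follows from Proposition \ref{Proposition2.9_Attouch_1984_book}b) applied to the constant sequence $\epsilon_n\equiv\epsilon$ and from $\bigcap_{\epsilon>0}\epsilon\text{-}\argmin f=\argmin f$ (valid since $\inf f$ is finite); conversely, given $\bar x\in\argmin f$ and $\epsilon>0$, the $\limsup$-inequality of epi-convergence yields $x_n\to\bar x$ with $\limsup_n f_n(x_n)\leq f(\bar x)=\inf f$, which with $\inf f_n\to\inf f$ from (a$'$) gives $f_n(x_n)\leq\inf f_n+\epsilon$ for all large $n$, i.e. $\bar x\in\liminf_n\epsilon\text{-}\argmin f_n$; since $\liminf_n S_n\subseteq\limsup_n S_n$ always, chaining the three inclusions closes the loop.

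I expect the main obstacle to be the very first step of (a$'$): transferring the eventual level-compactness of the $f_n$ to relative compactness of the sublevel sets of the epi-limit $f$ in a general Polish space. In $\mathbb{R}^d$ this is routine \cite[Chapter 7.E]{Rockafellar_wets_book1998}, but here it must be carried out metrically through a diagonal extraction, and one has to be a little careful that relative compactness is preserved by the constructions involved. A secondary point worth checking is that the exceptional null set can indeed be chosen independently of $\epsilon_n$, so that the quantifier ``for any measurable sequence $\epsilon_n$'' in b) is legitimate --- which is the case, since neither epi-convergence nor eventual level-compactness involves $\epsilon_n$.
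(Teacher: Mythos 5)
Your proposal is correct, and its skeleton coincides with the paper's: the same reduction to the functional formulation \eqref{notation_ergodic}, the same choice of a single full-measure set on which $f_n(\xi,\cdot)\eto f$ (from Theorem \ref{maintheorem01ergodic}), eventual level-compactness and $\epsilon_n(\xi)\to0$ all hold, and the same compactness argument for nonemptiness in b) --- the paper likewise traps near-minimizers in the relatively compact set $\bigcup_{n\geq n_\xi}\textnormal{lev}_{\leq\alpha}f_n$ with $\alpha=\max\{\inf f+1,1\}$, finite by feasibility. Where you diverge is after that point: the paper disposes of a) and c) in one line by citing \cite[Theorems 2.11 and 2.12]{Attouch_1984_book} (convergence of infima, and the two-sided characterization of $\argmin f$ via $\epsilon$-argmins, for epi-convergent sequences under inf-compactness), whereas you re-derive both from Proposition \ref{Proposition2.9_Attouch_1984_book} together with the liminf/limsup inequalities of epi-convergence. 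Your elementary route buys self-containedness (and makes explicit that only Proposition \ref{Proposition2.9_Attouch_1984_book} plus recovery sequences are really needed), at the cost of a few extra verifications; the citation route is shorter but hides exactly the steps you spell out. One remark on the step you flag as the main obstacle: transferring level-compactness to the epi-limit is easier than you fear and needs no diagonal extraction --- for $x$ with $f(x)\leq\alpha$ a recovery sequence $x_n\to x$ with $\limsup f_n(x_n)\leq\alpha$ shows $x$ lies in the closure of $\bigcup_{n\geq N}\textnormal{lev}_{\leq\alpha'}f_n$ for every $\alpha'>\alpha$ and every $N$, and taking $N=n_{\alpha'}$ this closure is compact; moreover this attainment argument is not even strictly necessary, since nonemptiness of $\limsup_n\epsilon_n\text{-}\argmin f_n$ combined with Proposition \ref{Proposition2.9_Attouch_1984_book}b) already yields $\argmin f\neq\emptyset$. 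Your observation that the exceptional null set can be taken independent of $\epsilon_n$ (up to intersecting with the set where $\epsilon_n(\xi)\to0$) is also consistent with the paper's construction of $\hat\Xi$.
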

\begin{proof}
	Consider the notation given  in \eqref{notation_ergodic}. Let us define $\alpha:=\max\{ \inf  f +1,1\}$, we have that $\alpha< +\infty$ due to the feasibility of    \eqref{OP01}.  Consider a set $\hat{\Xi}$ of full measure such that  for all $\xi \in \hat{\Xi}$
	\begin{enumerate}[label=(\roman*)]
		\item  $(f_n(\xi,\cdot))_{n\in \mathbb{N} }$  is eventually level compact,
		\item $\epsilon_n(\xi) \to 0$,
		\item $\limsup\limits_{n\to \infty} \epsilon_n(\xi)\text{-}\argmin f_n(\xi, \cdot) \subseteq \argmin f$.
	\end{enumerate}
	Fix  $\xi \in \hat{\Xi}$ and a sequence $x_k(\xi) \in \epsilon_{n_k}( \xi)\text{-}\argmin f_{n_k}(\xi, \cdot)$, so there exists some $n_{\xi} \in \mathbb{N}$ such that for all $n \geq n_\xi$
	$$\epsilon_n(\xi) \leq 1,\text{ and }\bigcup\limits_{n \geq n_\xi} \textnormal{lev}_{\leq \alpha} f_n \text{ is relativelly compact.}$$ This implies that the sequence $(x_k(\xi))_{k\geq n_\xi}^\infty$ belongs to a compact set, so it has an accumulation point.	Consequently, we have that   $\limsup\limits_{n\to \infty} \epsilon_n\text{-}\argmin f_n(\xi, \cdot)\neq \emptyset,$ which proves  \ref{corollaryb}.
	
		Now, by  \cite[Theorem 2.11]{Attouch_1984_book} we conclude that
	$\lim\limits_{n\to \infty }v( f_n(\xi, \cdot))  =  v( f)$ for all $\xi \in \hat{\Xi}$, which concludes the proof of \ref{corollarya}. Finally, using \cite[Theorem 2.12]{Attouch_1984_book} we get that  $c)$ holds.
	
\end{proof}

\section{Applications}
    In this section, we present applications of the result found in Section \ref{consistency_Main}. The first application, given in Subsection \ref{subsection_scenario}, corresponds to prove the consistency of the  nonconvex \emph{scenario approach}  (see, e.g., \cite{Campi_Garatti_Ramponi_2015,campi2009scenario,Care_Garatti_Campi_2015}). The second application, given in Subsection  \ref{SIP_SECTION}, shows that our ergodic approach can be used to solve infinite programming problems.

\subsection{Consistency of Nonconvex Scenario Approach}\label{subsection_scenario}
In this section we consider  $ (\Xi^\infty, \mathcal{A}^\infty,\mathbb{P}^\infty)$ as the  denumerable product of the probability  space $(\Xi,\mathcal{A},\mathbb{P})$.

As in the previous section, we consider a sequence of lsc functions $g_n$, which converge continuously to $g$, let us define the following family of optimization problems: For each $\omega=(\xi_k)_{k=1}^\infty \in \Xi^\infty$ we set
\begin{equation}\label{OP02}\tag{$\mathcal{S}_{n}(\omega)$} \begin{aligned}
& \quad \quad \quad \min g_n(x)\\
s.t. \;\; & x \in S_n(\omega):=\bigcap\limits_{k=1}^{n}  M(\xi_k),
\end{aligned}
\end{equation} 
Let us define
$f_n : \Xi^\infty \times X \to \Rex$ given by 
\begin{align}\label{f_n}
\begin{aligned}
f_n(\omega, x)&:= g_n(x) + \frac{1}{n} \sum\limits_{k=1}^n\delta_{M(\xi_k)}(x),\\
f(x)&:= g(x) + \delta_{\eint{M}}(x).
\end{aligned}
\end{align}
The following results corresponds to the \emph{scenario approach} version of Theorem \ref{maintheorem01ergodic}.
\begin{theorem}\label{maintheorem01}
	Under the above setting we have that $f_n(\omega ,\cdot) \eto f$, $\mathbb{P}^\infty$-a.s. Consequently for any measurable sequence $\epsilon_n :\Xi^\infty \to (0,+\infty)$ with $\epsilon(\omega) :=\limsup \epsilon_n(\omega)<+\infty$,  $\mathbb{P}^\infty$-a.s.  we have that:
	\begin{enumerate}[label=\alph*)]
		\item $\limsup\limits_{n\to \infty }v_{\epsilon_n(\omega) } (f_n(\omega,\cdot) ) \leq  v_{\epsilon(\omega)} (f)$, $\mathbb{P}^\infty$-a.s.
		\item  
		$\limsup\limits_{n\to \infty} \epsilon_n(\omega)\text{-}\argmin f_n(\omega,\cdot) \subseteq \epsilon(\omega)\text{-}\argmin f,\;
		\mathbb{P}^\infty\text{-a.s.}$
	\end{enumerate}
	
\end{theorem}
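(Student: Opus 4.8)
The plan is to obtain Theorem \ref{maintheorem01} as an instance of Theorem \ref{maintheorem01ergodic}, by realising the i.i.d.\ scenario sampling as an ergodic measure preserving transformation on the product space. Concretely, I would equip $(\Xi^\infty,\mathcal{A}^\infty,\mathbb{P}^\infty)$ with the one-sided shift $T(\xi_1,\xi_2,\ldots):=(\xi_2,\xi_3,\ldots)$ and set $\tilde M(\omega):=M(\xi_1)$ for $\omega=(\xi_1,\xi_2,\ldots)$. Since $\mathbb{P}^\infty$ is a product measure, $T$ preserves $\mathbb{P}^\infty$, and it is ergodic because every shift-invariant set is a tail event, hence has measure $0$ or $1$ by Kolmogorov's zero--one law; moreover $\tilde M$ is a measurable closed-valued multifunction, being the composition of $M$ with the measurable first-coordinate projection. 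With this notation $\tilde M(T^{k}(\omega))=M(\xi_{k+1})$, so $\bigcap_{k=1}^{n}\tilde M(T^{k}(\omega))=\bigcap_{k=2}^{n+1}M(\xi_k)$, which differs from $S_n(\omega)$ only by a fixed, bounded index shift that is immaterial for the asymptotic (epigraphical) conclusions; alternatively one avoids this cosmetic nuisance altogether by simply rerunning the proof of Theorem \ref{maintheorem01ergodic} verbatim on the sequence $(\xi_k)_k$.

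Next I would identify the limit set. By Fubini's theorem the law of $\xi_1$ under $\mathbb{P}^\infty$ is $\mathbb{P}$, so $\tilde M_{as}=\{x:x\in M(\xi)\ \mathbb{P}\text{-a.s.}\}=M_{as}$ and hence $\delta_{\tilde M_{as}}=\delta_{M_{as}}$; thus the pair $(f_n,f)$ in \eqref{f_n} coincides — up to the harmless index shift above — with the pair \eqref{notation_ergodic} associated to $(\tilde M,T)$, and $f$ is literally unchanged.

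It then remains only to reproduce the short argument of Theorem \ref{maintheorem01ergodic}: put $p_n(x):=g_n(x)$ and $q_n(\omega,x):=\frac1n\sum_{k=1}^{n}\delta_{M(\xi_k)}(x)$; observe that $(\omega,x)\mapsto\delta_{M(\xi_1)}(x)$ is a random lsc function with integrable infimal function; invoke the ergodic theorem for random lsc functions \cite[Theorem~1.1]{Korf_Wets_2001} to get $q_n(\omega,\cdot)\eto\mathbb{E}[\delta_{M(\xi_1)}(\cdot)]=\delta_{M_{as}}$, $\mathbb{P}^\infty$-a.s.; apply Lemma \ref{epigraphconvergencesum} to deduce $f_n(\omega,\cdot)=p_n+q_n(\omega,\cdot)\eto g+\delta_{M_{as}}=f$ on a set $\hat\Xi$ of full $\mathbb{P}^\infty$-measure; and apply Proposition \ref{Proposition2.9_Attouch_1984_book} pointwise on $\hat\Xi$ to obtain a) and b). The only step with genuine content is checking that the shift is an ergodic measure preserving transformation — but that is classical (it is the Bernoulli shift, and ergodicity is Kolmogorov's zero--one law) — so Theorem \ref{maintheorem01} is essentially a corollary of Theorem \ref{maintheorem01ergodic}, and the remaining ``obstacle'' is mere bookkeeping of the index shift and the a.s.\ identification of feasible sets.
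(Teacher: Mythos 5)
Your proposal is correct and follows essentially the same route as the paper: lift the problem to $(\Xi^\infty,\mathcal{A}^\infty,\mathbb{P}^\infty)$ with the one-sided shift, set $\tilde M(\omega)=M(\xi_1)$, identify $\tilde M_{as}=M_{as}$, and invoke Theorem \ref{maintheorem01ergodic} (or rerun its Korf--Wets/Lemma \ref{epigraphconvergencesum}/Proposition \ref{Proposition2.9_Attouch_1984_book} argument verbatim). The only differences are cosmetic: you justify ergodicity of the Bernoulli shift via Kolmogorov's zero--one law where the paper cites the literature, and you explicitly dispose of the off-by-one index shift $\tilde M(T^k(\omega))=M(\xi_{k+1})$, a point the paper's displayed identity quietly glosses over.
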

\begin{proof}
	Consider the \emph{shift} on $\Xi^\infty$, that is, $T: \Xi^\infty \to \Xi^\infty$  given by
	\begin{align}\label{shif_T}
	T( (\xi_i)_{i=1}^\infty) =(\xi_{i+1})_{i=1}^\infty,
	\end{align}
	by \cite[Proposition 2.2]{Coudene_2016_book} $T$ is an  \emph{ergodic measure preserving  transformation} (For more details we refer to \cite{Walters_1982_book,Coudene_2016_book}).
	Furthermore, we extend the measurable multifunction $M$ to $\Xi^\infty$ just by defining $\tilde{M} : \Xi^\infty \tto X$ by  $\tilde{M}(\omega )=M(\xi_1)$, where $\omega =(\xi_i)_{i=1}^{\infty}$.  Using  notation  \eqref{f_n} we get 
	\begin{align}\label{6_samples}
	\begin{aligned}
	f_n(\omega, x)&:= g_n(x) + \frac{1}{n} \sum\limits_{k=1}^n\delta_{\tilde{M}(T^{k}(\omega))}(x)=g_n(x) + \frac{1}{n} \sum\limits_{k=1}^n\delta_{M(\xi_k)}(x),\\
	f(x)&:= g(x) + \delta_{\eint{\tilde{M}}}(x)=g(x) + \delta_{\eint{M}}(x)..
	\end{aligned}
	\end{align}
	Then, Theorem \ref{maintheorem01ergodic} gives us that for almost all $\omega=(\xi_i) _{i=1}^\infty \in \Xi^\infty$
	\begin{enumerate}[label=\roman*)]
		\item $ f_n (\omega, \cdot) \to f,$
		\item $\limsup\limits_{n\to \infty }v_{\epsilon_n( \omega ) } (f_n( \omega ,\cdot) ) \leq  v_{\epsilon(\omega) } (f)$, 
		\item $\limsup\limits_{n\to \infty} \epsilon_n(\omega)\text{-}\argmin f_n( \omega,\cdot) \subseteq \epsilon( \omega )\text{-}\argmin f$
	\end{enumerate}
\end{proof}
\begin{remark}
	It is worth mentioning that Theorem \ref{maintheorem01} can be proved using the same proof given in Theorem \ref{maintheorem01ergodic},  copied step by step, but  using \cite[Theorem 2.3]{Arstein_Wets_1995} instead of \cite[Theorem 1.1]{Korf_Wets_2001}. 
\end{remark}

Similar to the previous section, we can get more precise estimations under some compactness assumptions. The proof of this result follows considering the  representation of \eqref{f_n} given in \eqref{6_samples} using the shift transformation defined in \eqref{shif_T}.  Also, it can follow mimicking the proof of the Corollary step by step, and using Theorem \ref{maintheorem01} instead of Theorem \ref{maintheorem01ergodic}.
\begin{corollary}\label{corollaryscneario}
	Let us assume that  \eqref{OP01} is feasible, and the  sequence of functions $g_n$  is eventually level compact on $S_n(\omega)$  $\mathbb{P}^\infty$-a.s. $\omega \in \Xi^\infty$. Then,
	\begin{enumerate}[label=\alph*)]
		\item \label{corollarya}	$\lim\limits_{n\to \infty }v ( g_n ,S_n(\omega)) =  v(g,M_{as})$, $\mathbb{P}^\infty$-a.s. $\omega \in \Xi^\infty$.
		\item\label{corollaryb}For any measurable sequence $\epsilon_n :\Xi^\infty \to (0,+\infty)$ with $\epsilon_n \to 0$, $\mathbb{P}^\infty\text{-a.s.}$ we have that 
		\begin{align*}
		\emptyset \neq \limsup\limits_{n\to \infty} \epsilon_n(\omega)\text{-}\argmin\limits_{ S_n(\omega) } g_n  \subseteq \argmin\limits_{M_{as}} g ,\;
		\mathbb{P}^\infty\text{-a.s. } \omega \in \Xi^\infty.
		\end{align*}
		\item $
		\bigcap\limits_{\epsilon >0 } \liminf\limits_{n\to \infty} \epsilon\text{-}\argmin\limits_{ S_n(\omega) } g_n= \argmin\limits_{M_{as}} g    = \bigcap\limits_{\epsilon >0 } \limsup\limits_{n\to \infty} \epsilon\text{-}\argmin\limits_{ S_n(\omega) } g_n,\; \mathbb{P}^\infty\text{-a.s. } \omega \in \Xi^\infty.
	$
	\end{enumerate}
\end{corollary}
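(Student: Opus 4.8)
The plan is to reduce Corollary~\ref{corollaryscneario} to the already established ergodic statement, Corollary~\ref{Corollary_ergodic}, by the same device used to deduce Theorem~\ref{maintheorem01} from Theorem~\ref{maintheorem01ergodic}. First I would take $T:\Xi^\infty\to\Xi^\infty$ to be the shift \eqref{shif_T} and $\tilde M:\Xi^\infty\tto X$ the extension $\tilde M(\omega)=M(\xi_1)$, $\omega=(\xi_i)_{i=1}^\infty$; by \cite[Proposition~2.2]{Coudene_2016_book} the shift is an ergodic measure preserving transformation on $(\Xi^\infty,\mathcal A^\infty,\mathbb P^\infty)$ and $\tilde M$ is a measurable multifunction with closed values. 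The computation \eqref{6_samples} then shows that the data of \eqref{OP02} are exactly the data of the ergodic problem \eqref{OP02ergodic} attached to $\tilde M$ and $T$: one has $S_n(\omega)=\bigcap_{k=1}^n M(\xi_k)=\bigcap_{k=1}^n\tilde M(T^k(\omega))=E_n(\omega)$, $M_{as}=\tilde M_{as}$, and $f_n(\omega,\cdot)$ is precisely the function of \eqref{notation_ergodic} for $\tilde M$.

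With this identification the hypotheses transfer immediately: feasibility of \eqref{OP01} is untouched, and since $S_n(\omega)=E_n(\omega)$, the assumption that $g_n$ is eventually level compact on $S_n(\omega)$ for $\mathbb P^\infty$-a.e.\ $\omega$ is literally the assumption that $g_n$ is eventually level compact on $E_n(\omega)$ for $\mathbb P^\infty$-a.e.\ $\omega$ (equivalently, that $f_n(\omega,\cdot)$ is eventually level compact on $X$, using $\textnormal{lev}_{\leq\alpha}(f_n(\omega,\cdot),X)=\textnormal{lev}_{\leq\alpha}(g_n,E_n(\omega))$), and a measurable $\epsilon_n:\Xi^\infty\to(0,+\infty)$ with $\epsilon_n\to0$ a.s.\ is an admissible choice in Corollary~\ref{Corollary_ergodic}. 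Hence items (a), (b), (c) of Corollary~\ref{corollaryscneario} follow verbatim from items (a), (b), (c) of Corollary~\ref{Corollary_ergodic}. Equivalently --- and this is the route hinted at in the remark following Theorem~\ref{maintheorem01} --- I would copy the proof of Corollary~\ref{Corollary_ergodic} line by line with $\xi$ replaced by $\omega$ and Theorem~\ref{maintheorem01ergodic} replaced by Theorem~\ref{maintheorem01}: set $\alpha:=\max\{\inf f+1,1\}$, finite by feasibility; fix a full-measure $\hat\Xi\subseteq\Xi^\infty$ on which $f_n(\omega,\cdot)$ is eventually level compact, $\epsilon_n(\omega)\to0$, and $\limsup_n\epsilon_n(\omega)\text{-}\argmin f_n(\omega,\cdot)\subseteq\argmin f$; note that for such $\omega$ any selection $x_k\in\epsilon_{n_k}(\omega)\text{-}\argmin f_{n_k}(\omega,\cdot)$ eventually lies in $\textnormal{lev}_{\leq\alpha}f_{n_k}(\omega,\cdot)$, which sits inside a fixed relatively compact set, so $x_k$ has an accumulation point, giving the nonemptiness in (b); then \cite[Theorem~2.11]{Attouch_1984_book} gives (a) and \cite[Theorem~2.12]{Attouch_1984_book} gives (c).

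There is essentially no substantive obstacle here beyond bookkeeping: the only thing to verify carefully is that the two expressions for $f_n$ in \eqref{6_samples} agree and that ``eventually level compact on $S_n(\omega)$'' is exactly the hypothesis the ergodic corollary requires under the shift identification. Since the shift model was set up precisely so that the scenario framework becomes a special instance of the ergodic one, this verification is routine and no new analytic input is needed; if one preferred a self-contained argument avoiding the reduction, the same bound on the sublevel sets combined with Theorem~\ref{maintheorem01} and \cite[Theorems~2.11 and 2.12]{Attouch_1984_book} still does the job directly on $\Xi^\infty$.
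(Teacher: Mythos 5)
Your proposal is correct and follows essentially the same route as the paper: the paper likewise deduces the corollary from the shift representation \eqref{shif_T}--\eqref{6_samples} (reducing to Corollary \ref{Corollary_ergodic}), noting it can equally be obtained by repeating the proof of Corollary \ref{Corollary_ergodic} verbatim with Theorem \ref{maintheorem01} in place of Theorem \ref{maintheorem01ergodic}. Your bookkeeping of the identification $S_n(\omega)=E_n(\omega)$ and of the transfer of the level-compactness hypothesis is exactly the intended argument.
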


\begin{remark}
	It has not escaped our notice that in \cite{Ramponi2018} the authors did not show the consistency of the \emph{scenario approach} with a perturbation over the objective function $g$ as in \eqref{OP02}. Furthermore, only linear objective function and convex constraint sets were considered in \cite{Ramponi2018}.  
\end{remark}
{ 
 In the next result, we provide a concrete application of the above corollary using a  Moreau envelope of the objective function. Following \cite{Rockafellar_wets_book1998}, we recall that  given a function $g$ and $\lambda>0$, the Moreau envelope function $\MoreauYosida{g}{\lambda}$ is defined by
\begin{align*}
\MoreauYosida{g}{\lambda}(x):=\inf\left\{ 	f(u) + \frac{1}{2\lambda }\| x -u\|^2	 : u \in \mathbb{R}^d		\right\}.
\end{align*}
A function $g$ is said to be prox-bounded if there exists $\lambda >0$ such that $\MoreauYosida{g}{\lambda}(x)>-\infty$ for some $x\in \mathbb{R}^n$. In that case the supremum of all such $\lambda $ is the threshold $\lambda_g$ of prox-boundeness for $g$.

\begin{corollary}
	Let $g: \mathbb{R}^d \to \Rex $ be a proper, lsc and prox-bounded function with threshold $\lambda_g >0$, let $h :\Xi \times \mathbb{R}^d \to \Rex$ be a normal integrand function and $C \subseteq \mathbb{R}^d$ be a bounded closed set, and defined the optimization problem
	\begin{equation}\label{exact}
	\begin{aligned}
&  \min g(x)\\
s.t. \;\; & \varphi(\omega,x)\leq 0, \text{ a.s.}\\
\;\; & x\in C.
\end{aligned}
	\end{equation}
Consider  $\lambda_n  \searrow 0$ with $\lambda_n   \in (0, \lambda_g)$. For each  $\omega=(\xi_k)_{k=1}^\infty \in \Xi^\infty$ we defined the sequence of optimization problems 
	\begin{equation}\label{aproxn}
\begin{aligned}
&  \min \MoreauYosida{g}{\lambda_n}(x)\\
s.t. \;\; & \varphi(\xi_i,x)\leq 0, \text{ for }i=1,\ldots, n\\
\;\; & x\in C.
\end{aligned}
\end{equation}
 Let $x_n(\omega) $ be a (measurable) selection of the minimizers of Problem \eqref{aproxn}. Then, if Problem \eqref{exact} is feasible, we have
 \begin{enumerate}[label=\alph*)]
 	\item $\MoreauYosida{g}{\lambda_n}(x_n(\omega))$ converges to the optimal value of Problem \eqref{exact},  $\mathbb{P}^\infty$-a.s. $\omega \in \Xi^\infty$.
 	\item Any cluster point of $(x_n(\omega))_{n\in \mathbb{N}}$ is a minimizer of Problem \eqref{exact}, $\mathbb{P}^\infty$-a.s. $\omega \in \Xi^\infty$.
 \end{enumerate}  
	\end{corollary}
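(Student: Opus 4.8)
The plan is to recast Problem~\eqref{exact} in the framework of~\eqref{OP01} and Problems~\eqref{aproxn} in that of~\eqref{OP02}, and then to reproduce the argument in the proof of Corollary~\ref{corollaryscneario}. I would set $M(\xi):=\{\,x\in C:\varphi(\xi,x)\le 0\,\}$. Since $C$ is closed, $\delta_C$ is a normal integrand, hence so is $(\xi,x)\mapsto\varphi(\xi,x)+\delta_C(x)$, and therefore its $0$-sublevel set $M(\xi)$ defines a measurable closed-valued multifunction (see \cite{Rockafellar_wets_book1998}), in fact compact-valued since $M(\xi)\subseteq C$. With this choice, $S_n(\omega)=\bigcap_{k=1}^{n}M(\xi_k)$ is exactly the feasible set of~\eqref{aproxn}, $M_{as}=\{\,x\in C:\varphi(\xi,x)\le 0\ \text{a.s.}\,\}$ is that of~\eqref{exact}, and, writing $g_n:=\MoreauYosida{g}{\lambda_n}$ and using notation~\eqref{f_n}, any minimizer $x_n(\omega)$ of~\eqref{aproxn} minimizes $f_n(\omega,\cdot)$ and satisfies $\MoreauYosida{g}{\lambda_n}(x_n(\omega))=v(f_n(\omega,\cdot))$, while the optimal value and the solution set of~\eqref{exact} are $v(f)$ and $\argmin f=\argmin_{M_{as}}g$.

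The delicate point — and the reason Corollary~\ref{corollaryscneario} cannot be invoked verbatim — is that $g_n=\MoreauYosida{g}{\lambda_n}$ need \emph{not} converge continuously to $g$ (e.g.\ for $g=\delta_{\{0\}}$ on $\mathbb{R}$ and $x_n=\sqrt{\lambda_n}\to 0$ one has $\MoreauYosida{g}{\lambda_n}(x_n)=\tfrac12\not\to 0=g(0)$), so Lemma~\ref{epigraphconvergencesum} does not apply and the epi-convergence $f_n(\omega,\cdot)\eto f$ must be obtained by hand. I would use monotonicity: as $g$ is proper, lsc and prox-bounded with threshold $\lambda_g>0$ and $\lambda_n\searrow 0$, the envelopes $g_n$ are finite-valued, continuous and increase pointwise to $g$ (see \cite[Theorem~1.25]{Rockafellar_wets_book1998}); and since $S_n(\omega)$ is nonincreasing in $n$, the indicators $\delta_{S_n(\omega)}$ are nondecreasing in $n$. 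Hence, for a.e.\ $\omega$, $f_n(\omega,\cdot)=g_n+\delta_{S_n(\omega)}$ is a nondecreasing sequence of lsc functions whose pointwise limit is $g+\delta_{M_{as}}=f$ (here $\delta_{S_n(\omega)}$ has pointwise limit $\delta_{M_{as}}$ a.s., because its epi-limit exists and equals $\delta_{M_{as}}$ a.s.\ by \cite[Theorem~1.1]{Korf_Wets_2001} — as in the proof of Theorem~\ref{maintheorem01ergodic} — and the pointwise and epi-limits of a nondecreasing sequence of lsc functions coincide). Finally, a nondecreasing sequence of lsc functions always epi-converges to its pointwise limit: condition b) holds with $x_n\equiv x$, and condition a) follows because for every $m$ and $x_n\to x$ we have $\liminf_n f_n(x_n)\ge\liminf_n f_m(x_n)\ge f_m(x)$ by lower semicontinuity of $f_m$, after which $m\to\infty$. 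Thus $f_n(\omega,\cdot)\eto f$ for a.e.\ $\omega$.

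To conclude I would mimic the proof of Corollary~\ref{Corollary_ergodic}/Corollary~\ref{corollaryscneario}. Every sublevel set of every $f_n(\omega,\cdot)$ and of $f$ is contained in the compact set $C$, so $g_n$ is trivially eventually level compact on $S_n(\omega)$; feasibility of~\eqref{exact} gives $\inf f<+\infty$, and $\inf f>-\infty$ since $g$ is lsc on the compact set $M_{as}$. Then \cite[Theorem~2.11]{Attouch_1984_book} yields $v(f_n(\omega,\cdot))\to v(f)$ a.s., which is item~a) because $\MoreauYosida{g}{\lambda_n}(x_n(\omega))=v(f_n(\omega,\cdot))$ and $v(f)$ is the optimal value of~\eqref{exact}. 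For item~b), $(x_n(\omega))_n\subseteq C$ is bounded, hence has cluster points; if $x_{n_j}(\omega)\to\bar x$ with $x_{n_j}(\omega)\in\argmin f_{n_j}(\omega,\cdot)$, then $d(\bar x,\argmin f_{n_j}(\omega,\cdot))\to 0$, so $\bar x\in\limsup_n\argmin f_n(\omega,\cdot)$, and \cite[Theorem~2.12]{Attouch_1984_book} (equivalently Proposition~\ref{Proposition2.9_Attouch_1984_book} with $\epsilon_n\equiv 0$) gives $\bar x\in\argmin f=\argmin_{M_{as}}g$, i.e.\ $\bar x$ solves~\eqref{exact}. I expect the one genuinely delicate step to be the epi-convergence claim, precisely because continuous convergence of the Moreau envelopes fails and one must instead rely on their monotonicity together with that of the constraint indicators.
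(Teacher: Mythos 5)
Your proposal is correct, and it departs from the paper's proof at exactly the step you single out. The paper's own argument is a three-line reduction: it sets $M(\xi)=\{x\in C:\varphi(\xi,x)\le 0\}$ (measurable by \cite[Proposition 14.33]{Rockafellar_wets_book1998}), asserts on the basis of \cite[Theorem 1.25]{Rockafellar_wets_book1998} that $\MoreauYosida{g}{\lambda_n}$ converges \emph{continuously} to $g$, observes eventual level compactness from the boundedness of $C$, and then invokes Corollary \ref{corollaryscneario} directly. Your objection to the middle step is well taken: Theorem 1.25 only yields monotone pointwise (hence epigraphical) convergence of the envelopes, and continuous convergence of the continuous functions $\MoreauYosida{g}{\lambda_n}$ would force the limit $g$ to be continuous, which a proper lsc extended-real-valued $g$ need not be; your example $g=\delta_{\{0\}}$ with $x_n=\sqrt{\lambda_n}$ is a genuine counterexample. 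Consequently Corollary \ref{corollaryscneario}, whose standing hypothesis is continuous convergence of $g_n$ (needed for Lemma \ref{epigraphconvergencesum}), cannot be applied verbatim for $g$ as general as in the statement; the paper's route is only valid if $g$ is additionally finite-valued and continuous, in which case Dini's theorem upgrades $\MoreauYosida{g}{\lambda_n}\nearrow g$ to uniform convergence on compact sets. Your workaround — proving $f_n(\omega,\cdot)\eto f$ by hand from the monotonicity of both $\MoreauYosida{g}{\lambda_n}$ and $\delta_{S_n(\omega)}$, identifying $\bigcap_{k}M(\xi_k)=M_{as}$ almost surely through the law-of-large-numbers/ergodic step underlying Theorem \ref{maintheorem01}, and then rerunning the compactness argument of Corollary \ref{Corollary_ergodic} via \cite[Theorems 2.11 and 2.12]{Attouch_1984_book} — is sound and self-contained (the identity $\MoreauYosida{g}{\lambda_n}(x_n(\omega))=v(f_n(\omega,\cdot))$ and the containment of all sublevel sets in the compact set $C$ are exactly what those theorems need). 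What this costs is redoing the epi-convergence step that Corollary \ref{corollaryscneario} was meant to encapsulate; what it buys is that the corollary then genuinely holds for every proper, lsc, prox-bounded $g$, i.e.\ your argument repairs the gap in the published proof rather than merely reproducing it.
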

\begin{proof}
Let us define the  multifunction $M: \Xi \tto C$  given by $M(\xi):=\{ x\in C: \varphi(\xi,x) \leq0\},$ which is measurable due to  \cite[Proposition 14.33]{Rockafellar_wets_book1998}. Moreover, 
by theorem \cite[Theorem 1.25]{Rockafellar_wets_book1998}, $\MoreauYosida{g}{\lambda_n}$ converges continuously $g$. Finally, since $C \subseteq \mathbb{R}^d$ is bounded and closed,   we have that the sequence of function  $\MoreauYosida{g}{\lambda_n}$  is eventually level compact on the sets $\cap_{k=1}^n M(\xi_k)$. Therefore, applying Corollary \ref{corollaryscneario} we get the result.
	\end{proof}
}
\subsection{Application to Infinite Programming Problems}\label{SIP_SECTION}
In this part of the work, we use the result of Section  \ref{consistency_Main} to show that  a sequence of sub-problems can be used to give an approach for infinite programming problems.

Consider the following problem of \emph{infinite programming} (\emph{semi-infinite programming}, if $X$ is  a finite dimensional vector space)
\begin{equation}\label{OPSIP}\tag{$\mathcal{I}$}
\begin{aligned}
&\quad\quad  \min g(x)\\
s.t. \;\;&x \in M_a:=\bigcap\limits_{s \in S} M(s),
\end{aligned}
\end{equation}
where $S$ is a topological space, and $M:S \tto X$ is an  \emph{outer-semicontinuous} set-valued map, that is to say, for every net $s_\nu \to s$ and every net $x_\nu \in M(s_\nu)$ with $x_\nu \to x$ we have $x\in M(s)$.  We denote by $\mathcal{A}$  any $\sigma$-algebra, which contains all open subsets on $S$, and consider $\mu: \mathcal{A} \to \mathbb{R}$ a strictly positive finite measure, that is to say,   $\mu(S)<+\infty$ and 
\begin{align*}
\mu(U) >0, \text{ for every open set } U \subseteq S,
\end{align*}
let us consider an \emph{ergodic measure preserving transformation}  $T: S \to S$. With this framework, we define the sequence of optimization problems
\begin{equation}\label{OPSIP_n}\tag{$\mathcal{I}_{n}(s)$} \begin{aligned}
&\;\;\;\;\;\;\; \min g_n(x)\\
s.t. \;\; & x \in I_n(s):=\bigcap\limits_{k=1}^n M(T^{k}(s)),
\end{aligned}
\end{equation}
where $g_n$ converges continuously to $g$. As a simple application of  Theorem \ref{maintheorem01ergodic} we get the following result, which give us a relation between Problems \eqref{OPSIP_n} and   \eqref{OPSIP}.
\begin{corollary}\label{TheoreSEMI}
	Let us assume that $(S,\mathcal{A},\mu)$ is complete,   \eqref{OPSIP} is feasible, and   the  sequence of functions $g_n$  is eventually level compact on $I_n(s)$  $\mu$-a.e.	Then,
	\begin{enumerate}[label=\alph*)]
		\item 	$\lim\limits_{n\to \infty }v(g_n,I_n(s))  =  v(g,M_a)$, $\mu$-a.e.
		\item For any measurable sequence $\epsilon_n :S \to (0,+\infty)$ with $\epsilon_n(s) \to 0$, $\mu$-a.e. we have that 
		\begin{align*}
		\emptyset \neq \limsup\limits_{n\to \infty} \epsilon_n(s)\text{-}\argmin\limits_{I_n(s)} g_n  \subseteq \argmin\limits_{M_a} g ,\;
		\mu\text{-a.e.}
		\end{align*}
		\item $
		\bigcap\limits_{\epsilon >0 } \liminf\limits_{n\to \infty} \epsilon\text{-}\argmin\limits_{I_n(s)} g_n = \argmin\limits_{M_a} g   = \bigcap\limits_{\epsilon >0 } \limsup\limits_{n\to \infty} \epsilon\text{-}\argmin\limits_{I_n(s)} g_n ,\;  \mu\text{-a.e.}
	$
	\end{enumerate}
\end{corollary}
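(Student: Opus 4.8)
The plan is to deduce Corollary~\ref{TheoreSEMI} from Corollary~\ref{Corollary_ergodic} after (i) rescaling $\mu$ to a probability measure preserved by $T$, (ii) verifying that $M$ fits the framework of Section~\ref{consistency_Main}, and (iii) identifying the ``everywhere'' intersection $M_a$ with the $\mu$-almost-everywhere intersection.

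For (i), set $\mathbb{P}:=\mu(S)^{-1}\mu$. Since $0<\mu(S)<+\infty$, this is a probability measure with the same null sets as $\mu$, so $(S,\mathcal{A},\mathbb{P})$ is complete and ``$\mathbb{P}$-a.s.'' means exactly ``$\mu$-a.e.''; in addition $\mathbb{P}(U)>0$ for every nonempty open $U\subseteq S$. Rescaling the measure does not affect \eqref{pres_meas} or \eqref{ergodic_trans}, so $T$ is an ergodic measure preserving transformation of $(S,\mathcal{A},\mathbb{P})$. For (ii), outer semicontinuity of $M$ says precisely that its graph $\{(s,x):x\in M(s)\}$ is closed in $S\times X$, and the constant net $s_\nu\equiv s$ shows that each value $M(s)$ is closed. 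Because $X$ is Polish, hence second countable, one has $\mathcal{B}(S\times X)=\mathcal{B}(S)\otimes\mathcal{B}(X)\subseteq\mathcal{A}\otimes\mathcal{B}(X)$; thus the graph of $M$ belongs to $\mathcal{A}\otimes\mathcal{B}(X)$, which together with completeness of $(S,\mathcal{A},\mathbb{P})$ and closedness of the values makes $M$ a measurable multifunction in the sense of Section~\ref{notationandpreliminary} (see \cite{Rockafellar_wets_book1998,MR0467310}); in any case it already makes $(s,x)\mapsto\delta_{M(s)}(x)$ a random lsc function, which is all that the proof of Theorem~\ref{maintheorem01ergodic} invokes.

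Step (iii) is where the real content lies. Write $M_{as}:=\{x\in X:x\in M(s)\ \mathbb{P}\text{-a.s.}\}$; the inclusion $M_a\subseteq M_{as}$ is immediate. Conversely, take $x\in M_{as}$ and set $D_x:=\{s\in S:x\in M(s)\}$, so that $\mathbb{P}(S\setminus D_x)=0$. Since $\mathbb{P}$ charges every nonempty open set, $S\setminus D_x$ has empty interior, i.e.\ $D_x$ is dense in $S$. Fix $s_0\in S$ and order the neighbourhood filter of $s_0$ by reverse inclusion; picking $s_U\in U\cap D_x$ for each open $U\ni s_0$ produces a net $s_U\to s_0$ with $x\in M(s_U)$ for every $U$. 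Applying outer semicontinuity of $M$ to this net and to the constant net $x_U\equiv x$ gives $x\in M(s_0)$, and since $s_0$ was arbitrary, $x\in M_a$. Hence $M_a=M_{as}$.

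It remains to combine these facts. Since $I_n(s)=\bigcap_{k=1}^{n}M(T^{k}(s))=E_n(s)$ and $M_a=M_{as}$, Problem~\eqref{OP01} (instantiated on $(S,\mathcal{A},\mathbb{P})$ with this $M$) is literally Problem~\eqref{OPSIP}; in particular it is feasible by hypothesis, $g_n$ converges continuously to the lsc function $g$, and $g_n$ is eventually level compact on $E_n(s)=I_n(s)$ for $\mathbb{P}$-a.s.\ $s$. Corollary~\ref{Corollary_ergodic} then yields conclusions (a)--(c), with $v(g,M_{as})$ and $\argmin_{M_{as}}g$ replaced by $v(g,M_a)$ and $\argmin_{M_a}g$. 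The one genuinely non-routine point is the identity $M_a=M_{as}$: it is exactly here that outer semicontinuity of $M$ and strict positivity of $\mu$ on open sets are both indispensable; everything else is a direct transcription of the results of Section~\ref{consistency_Main}.
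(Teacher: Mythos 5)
Your proposal is correct and follows essentially the same route as the paper: identify $M_a$ with the almost-everywhere intersection via density of $D_x$ (strict positivity of $\mu$) together with outer semicontinuity of $M$, then rescale $\mu$ to the probability measure $\mathbb{P}=\mu(S)^{-1}\mu$ and invoke Corollary~\ref{Corollary_ergodic}. Your additional verification that $M$ has closed values and is measurable (closed graph plus completeness) is a detail the paper leaves implicit, but it does not change the argument.
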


\begin{proof}
	First, by  the \emph{outer-semicontinuous} of $M$ we have that the optimization problem \eqref{OPSIP} is equivalent to 
	\begin{equation}\label{OPSIP_mu}
	\begin{aligned}
	&\;\; \min g(x)\\
	s.t. \;\;&x \in M(s), \;  \mu\text{-a.e.}
	\end{aligned}
	\end{equation}
	Indeed, let $x \in  M(s)$ for almost all $s\in S$. Then, the set $D_x :=\{ s\in S: x\in M(s)		\}$ is dense due to the fact that $\mu$ is a strictly positive  measure. Consequently, for every $s \in S\backslash D$ there exists $s_\nu \to s$, so by the  \emph{outer-semicontinuous} of $M$ we get that $x \in M(s)$, and consequently $x\in M(s)$ for all $s\in S$.
	
	Next, consider the probability measure $\mathbb{P}(\cdot)= \mu(S)^{-1} \mu (\cdot)$. Then, applying  Corollary \ref{Corollary_ergodic} to \eqref{OPSIP_mu} we get that $a), b)$ and $c)$ hold with  \eqref{OPSIP_mu}, and by the equivalency with  \eqref{OPSIP} we conclude the proof.
\end{proof}
{
To end this section, we present a result that considers a sequence of functions mollified by convolution. Consider a  sequence mollifiers, that is, a sequence of  measurable functions $e_n : \mathbb{ R }^d \to [0,+\infty)$  with $\int_{\mathbb{R}^d} e_n  (z) dz=1$, such that the sets $\{ z : e_n(z) >0\}$ decrease to $\{ 0\}$. Given a continuous function $g: \mathbb{ R }^n \to R$, we define its mollification, $g_n : \mathbb{ R }^d \to \mathbb{ R }$, by
\begin{align}\label{mollifications}
g_n( x) = \int_{\mathbb{ R }^d}  e_n(x-z)g(z) dz, \text{  for all }x\in \mathbb{ R }^d.
\end{align}
\begin{corollary}
 Let $g: \mathbb{R}^d \to \mathbb{R} $ and  $\varphi :S\times \mathbb{R}^d \to \mathbb{ R }$ be continuous functions, and $C \subseteq \mathbb{R}^d$ be a bounded closed set. Consider the following semi-infinite programming problem
	\begin{equation}\label{exactsemi}
	\begin{aligned}
	&  \min g(x)\\
	s.t. \;\; & \varphi(S,x)\leq 0, \text{ for all }s\in S,\\
	\;\; & x\in C.
	\end{aligned}
	\end{equation}
Given  a measure preserving transformation $T: S \to S$ and    $s \in S$, we defined the sequence of optimization problems 
	\begin{equation}\label{aproxn2}
	\begin{aligned}
	&  \min g_n(x)\\
	s.t. \;\; & \varphi(T^i(s),x)\leq 0, \text{ for }i=1,\ldots, n\\
	\;\; & x\in C,
	\end{aligned}
	\end{equation}
	where   $g_n$ are given by \eqref{mollifications}. 
	Let $x_n(s) $ be a (measurable) selection of the minimizers of Problem \eqref{aproxn2}. Then, if Problem \eqref{exactsemi} is feasible, we have
	\begin{enumerate}[label=\alph*)]
		\item $g_n(x_n(z))$ converges to the optimal value of Problem \eqref{exact},  $\mu$-a.e. $s\in S$.
		\item Any cluster point of $(x_n(z))_{n\in \mathbb{N}}$ is a minimizer of Problem \eqref{exactsemi}, $\mu$-a.e. $s \in S$.
	\end{enumerate}  
\end{corollary}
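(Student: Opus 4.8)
The plan is to read this statement as a direct specialization of Corollary~\ref{TheoreSEMI}, so that the whole argument reduces to checking the hypotheses of that corollary for the concrete data $g,\varphi,C,T$ at hand. First I would introduce the set-valued map $M:S\tto X$ defined by $M(s):=\{x\in C:\varphi(s,x)\le 0\}$, so that the feasible set of \eqref{exactsemi} is exactly $M_a=\bigcap_{s\in S}M(s)$ and the feasible set of \eqref{aproxn2} is $I_n(s)=\bigcap_{k=1}^n M(T^k(s))$. The map $M$ is outer-semicontinuous: if $s_\nu\to s$ is a net and $x_\nu\in M(s_\nu)$ with $x_\nu\to x$, then $\varphi(s_\nu,x_\nu)\le 0$ passes to the limit by continuity of $\varphi$, giving $\varphi(s,x)\le 0$, while $x\in C$ because $C$ is closed; hence $x\in M(s)$. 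This puts us precisely in the framework of Subsection~\ref{SIP_SECTION} (with $T$ understood to be ergodic, as required there).

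Next I would dispatch the remaining structural hypotheses of Corollary~\ref{TheoreSEMI}. For the continuous convergence $g_n\to g$: since $g$ is continuous, its mollifications $g_n$ converge to $g$ uniformly on every compact set (a standard property of convolution with mollifiers; see, e.g., \cite{Rockafellar_wets_book1998}), and for continuous functions uniform convergence on compacta is precisely continuous convergence (cf.\ the remark following that definition). For eventual level compactness on $I_n(s)$: since $C$ is bounded and closed in $\mathbb R^d$, we have $\bigcup_{n}\textnormal{lev}_{\le\alpha}(g_n,I_n(s))\subseteq C$ for every $\alpha\in\mathbb R$, and $C$ is compact, so the sequence $g_n$ is trivially eventually level compact on $I_n(s)$ for every $s\in S$. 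Finally, feasibility of \eqref{exactsemi} gives $M_a\ne\emptyset$, and since $M_a\subseteq I_n(s)$ for every $n$ and every $s$, each $I_n(s)$ is a nonempty closed subset of the compact set $C$; as $g_n$ is continuous, the infimum in \eqref{aproxn2} is attained, so $x_n(s)$ exists, and a measurable selection is available by the usual measurable selection theorems. (If $(S,\mathcal A,\mu)$ is not already complete, I would first replace it by its completion, which alters neither the problems nor their solution sets.)

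With the hypotheses verified, I would invoke Corollary~\ref{TheoreSEMI}. Its part~(a) gives $v(g_n,I_n(s))\to v(g,M_a)$ for $\mu$-a.e.\ $s$; since $x_n(s)$ minimizes the continuous function $g_n$ over the compact set $I_n(s)$, this infimum is finite and $v(g_n,I_n(s))=g_n(x_n(s))$, while $v(g,M_a)$ is the optimal value of \eqref{exactsemi} --- this gives assertion (a). For (b) I would apply part~(b) of Corollary~\ref{TheoreSEMI} with the measurable choice $\epsilon_n(s)\equiv 1/n\to 0$: since $g_n(x_n(s))=\inf_{I_n(s)}g_n<v_{1/n}(g_n,I_n(s))$, we have $x_n(s)\in (1/n)\text{-}\argmin_{I_n(s)}g_n$, so every cluster point of $(x_n(s))_{n\in\mathbb N}$ lies in $\limsup_{n\to\infty}(1/n)\text{-}\argmin_{I_n(s)}g_n\subseteq\argmin_{M_a}g$, i.e.\ is a minimizer of \eqref{exactsemi}, for $\mu$-a.e.\ $s$.

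I expect the only input that is not pure bookkeeping to be the continuous convergence of the mollifications $g_n$ to $g$: this rests on uniform convergence on compact sets of convolutions of a continuous function, and it is precisely here that the boundedness of $C$ is essential, since it confines all feasible points of \eqref{exactsemi} and of the problems \eqref{aproxn2} to a single fixed compact set. Everything else --- the outer-semicontinuity of $M$, the compactness-driven level boundedness, the existence and measurability of $x_n(s)$, and the translation of the abstract conclusions of Corollary~\ref{TheoreSEMI} back into statements about $g_n(x_n(s))$ and the cluster points of $x_n(s)$ --- is routine.
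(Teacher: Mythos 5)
Your proposal is correct and follows essentially the same route as the paper: define $M(s):=\{x\in C:\varphi(s,x)\le 0\}$, check outer-semicontinuity from continuity of $\varphi$ and closedness of $C$, obtain continuous convergence of the mollifications $g_n$ to $g$ (the paper cites \cite[Exercise 7.19]{Rockafellar_wets_book1998} for this), get eventual level compactness from boundedness of $C$, and then apply Corollary~\ref{TheoreSEMI}. Your write-up merely spells out more of the bookkeeping (identifying $g_n(x_n(s))$ with $v(g_n,I_n(s))$ and passing cluster points through the $\epsilon_n$-argmin inclusion) that the paper leaves implicit.
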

\begin{proof}
Define the multifunction $M : S\tto  \mathbb{ R }^d$ given by $ M(s):=\{ x\in C : \varphi(s,x) \leq 0\}$, which is measurable and outer-semicontinuous due to the continuity of $\varphi$. Now, by \cite[Exercise 7.19]{Rockafellar_wets_book1998}, we have that $g_n$ converges continuously to $g$. Finally, since $C$ is bounded, we have that $g_n$ is eventually level compact on $\cap_{i=1}^n M(T^{i}(s))$. Therefore, by Corollary \ref{TheoreSEMI}, we get the result.
	\end{proof}
}
\section{Numerical Examples}\label{SectioNum}
Now, let us illustrate the above result with two different   examples.  The first one consider a \emph{best polynomial approximation}, which in particular can be expressed as a convex optimization problem. The second one consider a non-convex optimization problem. 

\subsection{Best  Functional Approximation}
In this subsection we focus on the following optimization problem. Consider a (measurable) absolutely bounded function $f:[0,1] \to \mathbb{R}$ and a (finite) family of linearly independent absolutely bounded  functions $e_j : [0,1] \to \mathbb{R}$ with $j=0,\ldots,q$. We want to find the best approximation of  $f$ in the linear space spanned by $\{ e_j\}_{j=0 }^q$. In order to solve this problems we follow   \cite{MR1628195}. Let us consider the following optimization problem:

\begin{equation*}
\begin{aligned}
&\;\;\;\;\;\;\; \min  \quad \beta \\
 \text{ s.t. } &\;\;\;\;\;\;\;  \| f - \sum\limits_{ j=0 }^q x_i \cdot e_j \|_{\infty}\  \leq \beta.
\end{aligned}
\end{equation*}
It can be equivalently expressed as

\begin{equation}\label{bestpolynomialaprx}\tag{$\mathcal{B}$}
\begin{aligned}
&\;\;\;\;\;\;\;  \min  \quad g(x) \\
\text{ s.t. }  &\;\;\;\;\;\;\;  x \in M(t) \text{ a.e. } t\in [0,1].
\end{aligned}
\end{equation}
where the (measurable) set-valued map $M: [0,1] \to \mathbb{R}^{q+2}$ is given by
 $$M(t):=\left\{  (x,\beta) 	\in \mathbb{R}^{q+1} \times \mathbb{R} : 
-\beta \leq  f(t) - \sum\limits_{ j=0}^q x_i \cdot e_j (t)   \leq \beta	\right\}$$ and  $g: \mathbb{R}^{q+2} \to \mathbb{R}$ is given by $g(x,\beta)=\beta$.

To illustrate our results let us solve numerically \eqref{bestpolynomialaprx}  for the particular function    $$f(t)= 10^2 t(2t-1)(t -1)(4t -1)(4t -3)$$ and $e_j$ the canonical base of polynomials, that is, $e_j(t):=t^i$. We use an ergodic measure preserving transformation $T(t)=t + \alpha \mod 1$, with some $\alpha \notin \mathbb{Q}$, and systematically, we solve the sequence of optimization problems for a fixed point $\bar{t} \in [0,1]$
\begin{equation}\label{bestpolynomialaprxn}\tag{$\mathcal{B}_n(\bar{t})$}
\begin{aligned}
&\;\;\;\;\;\;\;  \min  \quad g(x) \\
\text{ s.t. }  &\;\;\;\;\;\;\;  x \in M(T^k(\bar{t})),\; k=1,...,n.
\end{aligned}
\end{equation}
In Figure \ref{figure02} we show the results of the polynomial approximation found solving Problem \eqref{bestpolynomialaprxn} for different values of $n$ and for point $\bar{t}=0$  and $\alpha=\sqrt{7}$.
	\begin{center}
	\begin{figure}[h!!!]
		\centering	\includegraphics[scale=0.37]{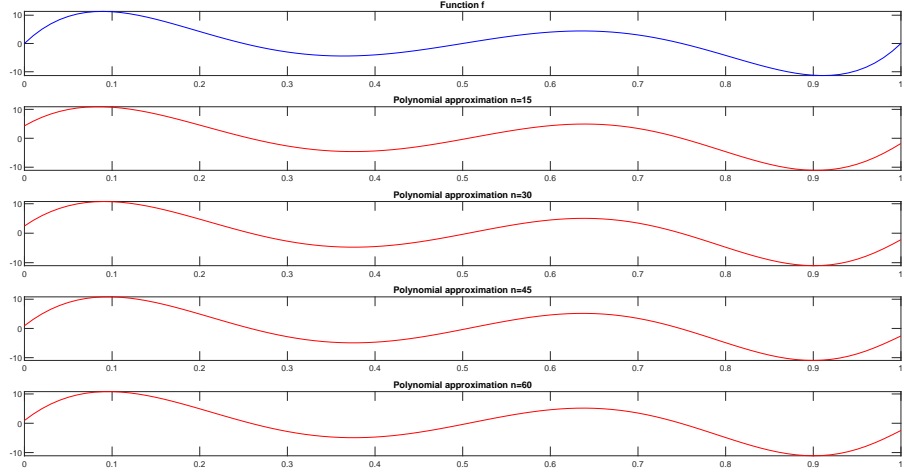}
		\caption{Polynomial approximation with $q=10$, $\bar{t}=\sqrt{5}$ and  $\alpha=\sqrt{7}$} \label{figure02}
	\end{figure}
\end{center}
\subsection{Rotation on the  $2$-dimensional Unit Sphere}\label{example}

Let us consider the following optimization problem
\begin{equation*}
\begin{aligned}
&\min g(x,y)&\\
\text{s.t. } &\alpha x + \beta y  \leq 6\;& \text{ for all } (\alpha,\beta) \in \mathbb{S}^{1},
\end{aligned}
\end{equation*}
where $g$ is a non-convex polynomial function with several local-minima  (see Figure \ref{figure01}), more precisely  we choose
\begin{align*}
g(x,y)= (x+5)(x+2)  (x-1) (x-9) +y(y+11)( y-4) ( y-5)  + xy. 
\end{align*} and $\mathbb{S}^{1}$ is the $2$-dimensional unit sphere, that is to say, $\mathbb{S}^{1}:=\{ (\alpha,\beta) \in \mathbb{R}^2	: \alpha^2 + \beta^2=1	\}$.
It is not difficult to see that the above problem is noting more than
\begin{equation}\label{P_example_real}
\begin{aligned}
&\min g(x,y)\\
\text{s.t. }  &x^2 +  y^2  \leq 36.
\end{aligned}
\end{equation}
To solve this problem, we use an irrational rotation  $T: \mathbb{S}^1 \to \mathbb{S}^1$, that is, $T(\xi)=\xi \cdot e^{2\pi\theta i}$ with $\theta \in [0,1] \backslash \mathbb{Q}$. Here the multiplication is in the sense of complex numbers. Therefore, we have to numerically solve  the following optimization problems

\begin{equation}\label{P_example_n} \tag{$\mathcal{U}_n(\xi)$}
\begin{aligned}
&\min g(x,y)&\\
\text{s.t. } &x \in U_n(\xi)
\end{aligned}
\end{equation}
where $ U_n(\xi): =\{  (x,y) \in \mathbb{R}^{
	2} :  \langle  T^k(\xi)  , (x,y)\rangle  \leq 6, \; \forall k=1,\ldots,n \}$.

First, we have that the global minimum of $g$ is attained at $(x_u,y_u)=(6.3442 ,  -7.6398)$ and the minimum is $g(x_u,y_u)=-5490.9$.  On the other hand the optimal value  of  \eqref{P_example_real}  is attained at $(x_c,y_c)=( 3.2004 ,  -5.0752)$ with value $g(x_c,y_c)=-3519.1$. In Table \ref{Table_num} we can compare different numerical solutions to Problem \eqref{P_example_n}.

	\begin{center}
	\begin{figure}[h!!!]
		\centering	\includegraphics[scale=0.5]{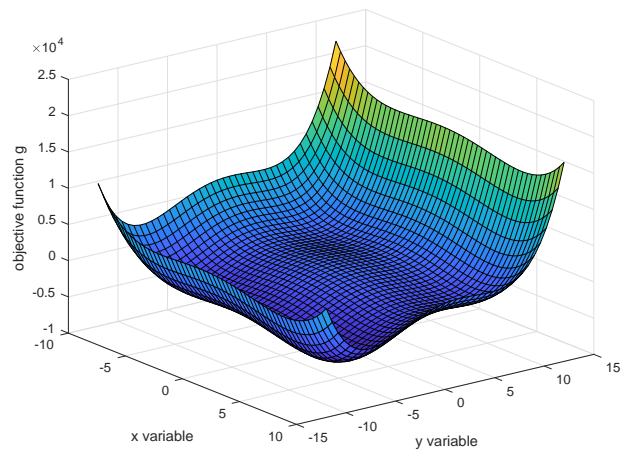}
		\caption{Function $g$ of Section \ref{example}} \label{figure01}
	\end{figure}
\end{center}

\begin{table}[h!!!]
	\begin{center}	
		\centering	\begin{tabular}{|||c|c|c|c|c|c|c|c|c|||}
			
			\hline	$\xi$ &$ n$ &$v(g,U_n(\xi))$ & $x$ & $y$ &$ n$ &$v(g,U_n(\xi))$  & $x$ & $y$ \\\hline \hline
			$e^{i{\pi}/{10}}$   &  5 &-3862.4 & 1.7623 & -6.7866  &15 &  -3634.7&  3.3542   &-5.1543        \\ \hline
			$e^{i{\pi}/{6}} $   & 5 &-4257.4 &2.6753&   -6.9991  &15 & -3519.3 &    3.0438  & -5.1746 \\ \hline
			$e^{i{\pi}/{3}} $   &  5 &-5406.3 &5.7143&     -7.2882 & 15 		&-3651.6	&		 4.3877  & -4.5573	\\ \hline
			$e^{i{4\pi}/{3}}$   &  5 &-4258.3 &  4.3254  & -5.5167&15 &	-3633.0			& 3.1903  & -5.2573	\\ \hline
			$e^{i{7\pi}/{4}}$   &  5 &-5251.2 &4.9946  & -7.5369 &15 & 	-3745.8	& 3.9081  & -4.9747
			\\ \hline

			$e^{i{\pi}/{10}}$      &  30 &-3519.4 & 3.3562 & -4.9778 &  40 &-3519.4 & 3.3562  & -4.9778 \\ \hline
			$e^{i{\pi}/{6}} $    &  30 &-3519.3 &3.0438  & -5.1746  &  40 &-3519.3 &3.1230   &-5.1243 \\ \hline
			$e^{i{\pi}/{3}} $   &  30 &-3542.3 &3.1288  & -5.1560&  40 &-3542.3 &3.0304&   -5.2028  \\ \hline
			$e^{i{4\pi}/{3}}$    &  30 &-3534.0 &  3.6424&   -4.8220& 40 &-3534.0 &3.5479  & -4.8771 \\ \hline
			$e^{i{7\pi}/{4}}$   &  30 &-3524.0 &3.6397   &-4.8089&  40 &-3524.0 &3.6397 &  -4.8089 \\ \hline
			
			$e^{i{\pi}/{10}}$      & 70 & -3519.4 &  3.3562   &-4.9778	&100	&		-3.5194 & 3.3562&   -4.9778 \\ \hline
			$e^{i{\pi}/{6}} $  & 70 &  -3519.3 &  3.1232  & -5.1242&100	&	-3.5193&  3.1232  & -5.1242
			\\ \hline
			$e^{i{\pi}/{3}} $  & 70 &-3532.1 &3.0304  & -5.2028&100	&-3.5219 &	2.9323  & -5.2494
			\\ \hline
			$e^{i{4\pi}/{3}}$  & 70 &-3529.4 &	3.4538 &  -4.9319&100	&-3.5268&	3.3599 &  -4.9866
			\\ \hline
			$e^{i{7\pi}/{4}}$  & 70 & -3523.6  &	3.5467   &-4.8655&100	&-3.5228 & 	3.4538  & -4.9220 \\ \hline
			
		\end{tabular}
		\captionsetup{justification=centering}
		\caption{Numerical solution of Problem \eqref{P_example_n}.}\label{Table_num}
	\end{center}
\end{table}

\newpage
\section{Conclusion and Perspectives}
In this paper, we have studied the consistency of a new method for solving robust optimization problems. In counterpart to classical methods in stochastic programming, it is based on ergodic measure preserving transformation instead of sample approximation.

In particular, our approach is more general, because we can recover the results based on samples using the  \emph{shift} on the denumerable product of the probability space.  Moreover, our results allow us to apply the technique to infinite programming problems under reasonable assumptions, and without any compactness assumption on the index set, as classical results in this field.

We believe that our analysis represents a first step in the understanding of a new approach based on ergodicity instead of a sequence of samples. A natural question relies on to understand the relation between the choice of the measure preserving transformation and the rate of convergence of the approximate sequence of minimizers.
 
{
It is important to mention that in our result a regularization of the objective function is considered in each step, that is the role of the sequence $g_n$ in Theorem \ref{maintheorem01ergodic} and its corollaries.  In most important applications, constraints are given by systems of possible nonsmooth inequalities. Commonly, this nonsmoothness is handled by using smooth regularizations, for instance, Moreau envelopes and regularization via mollifiers (see, e.g., \cite{Rockafellar_wets_book1998}). Therefore, it will be necessary the study of the consistency of a method which in each step use both a regularization of the constraints and our ergodic approach.

Therefore, we plan to investigate these possible  extensions of our presented method in a future paper.}

\textbf{Acknowledgements} First, the author would like to acknowledge the helpful discussions and great comments about this work given by  Professor  Marco A. L\'opez C\'erda, which improved notably the quality of the presented manuscript. Second, the author is grateful to the anonymous reviewers for their valuable suggestions and comments about the work, which increases the presentation of the current version of the manuscript.

\bibliographystyle{plain}
\bibliography{ErgodicApproach20200909}
\end{document}